\newtheorem{Definition}{Definition}[section]
\newtheorem{Theorem}[Definition]{Theorem}
\newtheorem{Lemma}[Definition]{Lemma}
\newtheorem{Proposition}[Definition]{Proposition}
\newtheorem{Corollary}[Definition]{Corollary}
\newtheorem{Example}[Definition]{Example}
\newtheorem{Remark}[Definition]{Remark}
\newtheorem{Note}[Definition]{Note}
\newcommand{\be}{\begin{equation}}
\newcommand{\ee}{\end{equation}}
\begin{document}

\title{\bf A Study of S-primary Ideals in Commutative Semirings}
\author{\bf Amaresh Mahato, Sampad Das, Manasi Mandal \footnote {e-mail : amaresh.blg2015@gmail.com, jumathsampad@gmail.com, manasi$_{-}$ju@yahoo.in } \ \\
{\small Department of Mathematics, Jadavpur University}\\
{\small 188, Raja S. C. Mallick Road, Kolkata -- 700032, India.}}
\date{}
\maketitle

\begin{abstract}
In this article, we define the concept of an $S$-$k$-irreducible ideal and $S$-$k$-maximal ideal in a commutative semiring. We also establish several results concerning $S$-$k$-primary ideals and prove the existence theorem and the $S$-version of the uniqueness theorem using localization, for $S$-$k$-primary decompositions.
 Also we show that the $S$-radical of every $S$-primary ideal is a prime ideal of $R$. Moreover, we investigate the structure of $S$-primary ideals in principal ideal semidomain and prove that each such ideal can be expressed of the form, $I = (vp^n)$, $n\in \mathbf{N}$ and for some $p \in \mathbf P -\mathbf P_S$ and $v\in R$ such that $(v)\cap S\neq \varnothing $, where  $\mathbf P$ is the set of all irreducible (prime) elements of R and for a multiplicative subset $S\subsetneq R$, the set $\mathbf P_S$ defined by $\mathbf P_S=\{p\in \mathbf P : (p) \cap S \neq \varnothing \}$.

\vspace{1cm}

\noindent
Keywords  : Semiring, $S$-$k$-primary, S-Noetherian, PISD, S-radical ideal.

\noindent
Mathematics Subject Classification : 13A15, 16Y60, 13F10.
\end{abstract}
\section{Introduction}

Let $R$ be a commutative semiring with unity. A non-empty subset $S \subsetneq R$ is called a multiplicative subset of $R$ if $1 \in S$ and $s_1 s_2 \in S$ for all $s_1, s_2 \in S$. An ideal $P$ of $R$ is said to be prime if $P\neq R$ and whenever $a,b\in R$ with $ab\in P$, then either $a\in P$ or $b\in P$.
Expanding on this idea, Hamed and Malek \cite{hm} introduced the concept of $S$-prime ideals as a generalization of prime ideals. Given a commutative ring $R$, a multiplicative subset $S\subsetneq R$ and an ideal $I\subseteq R$ such that $I\cap S=\phi$,  the ideal $I$ is called an $S$-prime ideal if there exists an element $s\in S$ such that for all $a,b\in R$, whenever $ab\in I$, it follows that $sa\in I$ or $sb\in I$.
Notably, when $S$ consists only of units in $R$, the concept of $S$-prime ideals reduces to that of classical prime ideals.
Separately, recall that an ideal $P\subseteq R$ is primary if for all $a,b\in R$, the inclusion $ab\in P$ implies that $a\in P$ or $b\in \sqrt{P}$, where $\sqrt{P}$ denotes the radical ideal of $P$ define as $\sqrt{P}=\{x \in R : x^n\in P , \text { where }n \in \mathbb{N}\}$.
Inspired by this and building on the generalization of prime ideals, Massaoud \cite{ms} introduced $S$-primary ideals in commutative rings, which extend the idea of primary ideals in a meaningful and inclusive way. Let $R$ be a commutative ring, $S$ a multiplicative subset of $R$, and $P$ a proper ideal of $R$ with $P\cap S=\phi$. Then $P$ is defined to be an $S$-primary ideal if there exists an element $s\in S$ such that for all $a,b\in R$, if $ab\in P$, then $sa\in P$ or $sb\in \sqrt{P}$. Several important results concerning $S$-primary ideals can also be found in Visweswaran \cite{vs}.
The absence of subtraction in semirings implies that many classical results from ring theory cannot be directly extended to semiring theory. To address this limitation, Henriksen~\cite{hn} introduced the notion of $k$-ideals. 
Specifically, if $R$ is a semiring, an ideal $I$ of $R$ is called a $k$-ideal if, for all 
$a,b \in R$, the conditions
$a+b \in I $ and $ a \in I$
imply that $b \in I$. This concept serves to compensate for the absence of additive inverses in semiring theory.
 After that numerous researchers have investigated $k$-ideals and their applications within semiring theory. As observed by Golan~\cite{go}, in a semiring, an ideal generated by a single element need not be a $k$-ideal. For instance, consider the ideal $I = \langle 1 + x \rangle$ in the semiring $\mathbb{N}[X]$, the semiring of polynomials with non-negative integer coefficients in the indeterminate $x$. We have
$(1 + x)^3 = (x^3 + 1) + 3x(1 + x) \in I$,
yet $x^3 + 1 \notin I$.
This clarify that $I$ is not a $k$-ideal in $\mathbb{N}[X]$. However, by imposing appropriate conditions on the semiring, it is possible to ensure that ideals generated by a single element are $k$-ideals. We see in \cite{sk}, (lemma 2.6),
 $R$ be additively cancellative, yoked and zerosumfree semiring, then for any ideal $I=<a> , a\in R $ is a $k$-ideal of $R$.

 A foundational contribution in Noetherian ring given by Noether \cite{no}, which laid the groundwork for the concept. During the past several decades, numerous generalizations of Noetherian rings have been actively explored by various authors due to its importance, one of the most notable is the introduction of $S$-Noetherian rings by Anderson and Dumitrescu \cite{ad}. The notion of $S$-Noetherianity, extensively studied in ring theory by several authors~\cite{aks, aks1, brt, as}, has been extended to semirings in order to develop a corresponding controlled finiteness condition. The notions of $S$-finite and $S$-stationary ideals in the context of $S$-Noetherian rings are also discussed in \cite{as}.
 In \cite{skb}, Kar, Bhoumik, and Goswami generalized the concept of $S$-prime ideals to $S$-$k$-prime and $S$-$k$-semiprime ideals. 
Moreover, Bhoumik and Goswami \cite{bs}, further extended the notion of $S$-primary ideals to $S$-$k$-primary ideals using the definition,  an ideal $A$ with $S\bigcap A \neq \varnothing$ of a semiring $R$ said to be $S$-$k$-primary ideal if $A$ is a $k$-ideal and for $a,b \in R $ if $ab \in A$ implies either $sa \in A$ or $sb \in \sqrt A$, for some $s \in S$. We also note that the $S$-version of the existence and uniqueness theorems for $S$-primary decomposition in $S$-noetherian ring theory is established in \cite{aks}. In \cite{sk}, the authors also investigate the primary decomposition of $k$-ideals within semiring theory.
In \cite{aq}, the authors study the structure of $S$-prime ideals and provide a characterization of such ideals in principal ideal domains.

In this paper, we extend certain important theorems and results related to $S$-$k$-primary ideals that are simultaneously $S$-primary and $k$-ideals and investigate their properties in the context of semirings.
 Furthermore, we define the notion of an $S$-$k$-irreducible ideal and show that, under certain conditions, every $S$-$k$-irreducible ideal is an $S$-$k$-primary ideal within the framework of $S$-Noetherian semirings. We also establish the existence theorem for $S$-$k$-primary decompositions in semirings. Moreover, by using the localization of ideals, we establish the $S$-version of uniqueness theorem for $S$-$k$-primary decompositions in semirings.
We establish some important results concerning the $S$-radical in a semiring $R$. In particular, the $S$-radical of an $S$-primary ideal of $R$ is a prime ideal. The $S$-radical of an $S$-prime ideal $P$ coincides with the $S$-radical of any $S$-$P$-primary ideal.
In this work, we examine $S$-primary ideals within principal ideal semidomain (PISD). 
We characterize the general form of such ideals and derive several fundamental structural results. 
Notably, we show that the radical of an $S$-primary ideal in a PISD is an $S$-maximal ideal.

   \subsection{S-k primary decomposition in commutative semirings}
 Massaoud \cite{ms} noted that an $S$-primary ideal $A$ of a ring $R$ is termed $S$-$P$-primary when its radical coincides with the $S$-prime ideal $P$; equivalently, when $\sqrt{A}=P$.

Similarly, we can define $S$-$k$-$P$-primary ideal, if $A$ is an $S$-$k$-primary ideal of a semiring $R$ and its radical satisfies 
$\sqrt {A} = P$, where $P$ is an $S$-prime ideal, then $A$ is called an $S$-$k$-$P$-primary ideal of $R$.

\begin{Proposition}
  The finite intersection of $S$-$k$-$P$-primary ideals is itself an $S$-$k$-$P$-primary ideal.
\end{Proposition}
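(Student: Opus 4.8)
The plan is to reduce to the case of two ideals by induction and then verify the defining properties of an $S$-$k$-$P$-primary ideal for the intersection. Suppose $A_1,\dots,A_n$ are $S$-$k$-$P$-primary ideals of $R$, so each $A_i$ is a $k$-ideal with $A_i\cap S=\varnothing$, with $\sqrt{A_i}=P$ for a fixed $S$-prime ideal $P$, and each admits a witness $s_i\in S$ such that $ab\in A_i$ forces $s_i a\in A_i$ or $s_i b\in\sqrt{A_i}=P$. Since $\bigcap_{i=1}^n A_i=A_1\cap\big(\bigcap_{i=2}^n A_i\big)$, it suffices by induction on $n$ to treat two such ideals; write $A=A_1\cap A_2$.

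First I would dispatch the structural properties. An intersection of ideals is an ideal, and it is a $k$-ideal: if $a+b\in A$ and $a\in A$, then reading this in each $A_j$ and using that $A_j$ is a $k$-ideal gives $b\in A_j$ for $j=1,2$, hence $b\in A$. Disjointness from $S$ is immediate, as $A\cap S\subseteq A_1\cap S=\varnothing$; in particular $A$ is proper. For the radical I would use the identity $\sqrt{A_1\cap A_2}=\sqrt{A_1}\cap\sqrt{A_2}$: the inclusion $\subseteq$ is clear, and for $\supseteq$, if $x^m\in A_1$ and $x^l\in A_2$ then $x^{m+l}$ lies in both $A_1$ and $A_2$, hence in $A$. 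This computation uses no subtraction and so transfers verbatim to the semiring setting. Consequently $\sqrt{A}=P\cap P=P$, the required $S$-prime radical.

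The crux is producing a single witness for $A$. I would take $s=s_1 s_2$, which lies in $S$ by multiplicative closure. Let $a,b\in R$ with $ab\in A$; then $ab\in A_1$ and $ab\in A_2$, so each $A_j$ supplies one of its two alternatives. If both yield the first alternative, i.e.\ $s_1 a\in A_1$ and $s_2 a\in A_2$, then $s a=s_2(s_1 a)\in A_1$ and $s a=s_1(s_2 a)\in A_2$ by ideal absorption, so $s a\in A$. Otherwise at least one $A_j$ yields the second alternative, say $s_j b\in P$; since $P$ is an ideal, multiplying by the remaining factor of $s$ gives $s b\in P=\sqrt{A}$. Thus in every case $s a\in A$ or $s b\in\sqrt{A}$, establishing that $A$ is $S$-$k$-primary with radical $P$, i.e.\ $S$-$k$-$P$-primary.

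The only point demanding care is this common-witness construction: the witnesses $s_1,s_2$ attached to the two ideals need not coincide, and the argument hinges on $S$ being multiplicative so that $s_1 s_2$ is again admissible, together with the observation that each alternative is stable under multiplying the witness by an extra factor (the $s a\in A$ branch via ideal absorption in $A_1$ and $A_2$, the $s b\in P$ branch via the ideal $P$). Once this is in place, the case analysis and the induction on $n$ are routine.
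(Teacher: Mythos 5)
Your proof is correct, but the crux is handled by a genuinely different argument than the paper's. For the primary property, the paper argues contrapositively: assuming $ab\in A$ and $sb\notin A$ for every $s\in S$, it asserts there is a single index $k$ with $sb\notin A_k$ for \emph{all} $s\in S$, and then invokes the $S$-primary property of that one ideal $A_k$ to get $sa\in\sqrt{A_k}=P$. You instead argue directly, forming the product witness $s=s_1s_2$ (and, via induction, $s_1\cdots s_n$) and running a case analysis on which alternative each $A_j$ supplies. Your route buys two things. First, the paper's index-selection step is actually a gap as written --- different elements of $S$ could place $sb$ in different $A_i$'s --- and the standard repair is exactly your product trick (if each $i$ had some $s_i$ with $s_ib\in A_i$, then $(\prod_i s_i)b\in\bigcap_i A_i$, a contradiction); so your construction makes explicit what the paper glosses over. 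Second, your argument produces one uniform witness $s\in S$ valid for all pairs $(a,b)$, which is what is required under the Massaoud-style reading of the definition (fix $s$ first, then quantify over $a,b$), whereas the paper's argument only yields a witness depending on the pair $(a,b)$. The structural components (intersection of $k$-ideals is a $k$-ideal, disjointness from $S$, and $\sqrt{A_1\cap\cdots\cap A_n}=\bigcap_i\sqrt{A_i}=P$) are the same in both proofs.
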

\begin{proof}
   Let $A_1, A_2, \ldots, A_n$ be $S$-$k$-primary ideals of $R$ such that $\sqrt{A_i}=P$ and $A_i \cap S=\varnothing$ for each $i=1,2,\ldots,n$. Then $S \cap (\bigcap_{i=1}^n A_i)=\varnothing$. Let $A=\bigcap_{i=1}^n A_i$. Since each $A_i$ is $P$-primary, we obtain $\sqrt{A}=\sqrt{\bigcap_{i=1}^n A_i}=\bigcap_{i=1}^n \sqrt{A_i}=P$.
Now assume $ab \in A$ and $sb \notin A$ for all $s \in S$. Then there exists an index $k$ with $1 \le k \le n$ such that $ab \in A_k$ and $sb \notin A_k$ for every $s \in S$. Since $A_k$ is $S$-$P$-primary, there exists some $s \in S$ such that $sa \in \sqrt{A_k}=P$. Hence $sa \in \sqrt{A}=P$, and therefore $A$ is an $S$-$P$-primary ideal of $R$.
Finally, since the intersection of finitely many $k$-ideals of $R$ is again a $k$-ideal, it follows that $A=\bigcap_{i=1}^n A_i$ is a $k$-ideal of $R$.

This completes the proof of the theorem.

    \end{proof}
\begin{Definition}
     An $k$-ideal $A$ (disjoint with $S$) of a  semiring $R$ said to be $S$-$k$-irreducible ideal if  $s(I \cap J) \subseteq A \subseteq(I \cap J)$ for some $s \in S$ and for some $k$-ideals $I,J$ of $R$, then there exists $s' \in S$ such that  $ss'I \subseteq A$ or $ss' J\subseteq A$.
\end{Definition}
\begin{Example}

Let $R = \mathbb{Z}_0^+$ and consider the ideals $I = \langle 2 \rangle$ and $J = \langle 3 \rangle$. 
Then $I \cap J = \langle 6 \rangle$. 
Let $A = \langle 12 \rangle$ and $S = \{ 2^n : n \in \mathbb{N} \}$. 
It is evident that there exists some $s \in S$ such that
$s (I \cap J) \subseteq A \subseteq (I \cap J)$.
Since $A \cap S = \varnothing$, there exists $s' \in S$ such that either
$ss'I \subseteq A $ or $ ss'J \subseteq A$.
Therefore, although $A$ is not an irreducible ideal, it is $S$-$k$-irreducible.

\end{Example}



 \begin{Definition}
An ideal $I$ of $R$ (with $I \cap S = \emptyset$) is called \emph{$S$-finite} if there exist
$s \in S$ and a finitely generated ideal $J \subseteq R$ such that
$sI \subseteq J \subseteq I$.
A semiring $R$ is said to be \emph{$S$-Noetherian} if every ideal of $R$ disjoint from $S$
is $S$-finite.
 \end{Definition}
    



In a manner similar to the proofs presented in ring theory \cite{brt, as}, the lemma can also be verified within semiring theory.
   \begin{Lemma} \label{1}

Let $R$ be a semiring and $S$ a multiplicative subset of $R$. Then the following conditions are equivalent:

    (1) $R$ is an $S$-Noetherian semiring;
    
    (2) every ascending chain of ideals of $R$ is $S$-stationary;
    
    (3) every nonempty collection of ideals of $R$ contains an $S$-maximal element.

\end{Lemma}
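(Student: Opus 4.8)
The plan is to show that the three conditions are equivalent by proving the cycle $(3)\Rightarrow(1)\Rightarrow(2)\Rightarrow(3)$, adapting the classical Noetherian arguments. Every step below manipulates only containments, ideal sums and products, and finite generation, and never forms a difference of two elements, so the absence of additive inverses in $R$ causes no trouble. The one semiring-flavoured bookkeeping point is to dispose separately of ideals meeting $S$: if $I\cap S\neq\varnothing$ and $s\in I\cap S$, then $J=(s)$ is finitely generated and $sI\subseteq(s)=J\subseteq I$, so such an $I$ is trivially $S$-finite. Throughout I read ``\emph{$S$-maximal element} of a collection $\Sigma$'' as a member $M\in\Sigma$ for which there is $s\in S$ with $sN\subseteq M$ for every $N\in\Sigma$ containing $M$, and ``\emph{$S$-stationary}'' as the existence of $s\in S$ and an index $N$ with $sI_n\subseteq I_N$ for all $n\ge N$.

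For $(3)\Rightarrow(1)$, I would take an ideal $I$ with $I\cap S=\varnothing$ and apply (3) to the nonempty collection $\Sigma=\{F\subseteq I : F\text{ finitely generated}\}$, which contains $(0)$. This yields an $S$-maximal $F_0\in\Sigma$ with witness $s\in S$. For any $a\in I$ the ideal $F_0+(a)$ is again finitely generated and contained in $I$, hence lies in $\Sigma$ and contains $F_0$; $S$-maximality gives $s\bigl(F_0+(a)\bigr)\subseteq F_0$, so in particular $sa\in F_0$. As $a$ was arbitrary, $sI\subseteq F_0\subseteq I$ with $F_0$ finitely generated, i.e. $I$ is $S$-finite. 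Since $I$ was an arbitrary ideal disjoint from $S$, the semiring $R$ is $S$-Noetherian.

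For $(1)\Rightarrow(2)$, given an ascending chain $I_1\subseteq I_2\subseteq\cdots$ I would set $I=\bigcup_n I_n$, which is again an ideal. If $I$ meets $S$, pick $s\in I\cap S$; then $s\in I_N$ for some $N$ and $sI_n\subseteq(s)\subseteq I_N$ for every $n$, so the chain is $S$-stationary. Otherwise $I$ is $S$-finite, so $sI\subseteq F=(a_1,\dots,a_m)\subseteq I$ for some $s\in S$; each generator $a_j$ lies in some $I_{n_j}$, and for $N=\max_j n_j$ we get $F\subseteq I_N$, whence $sI_n\subseteq sI\subseteq F\subseteq I_N$ for every $n$. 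Thus the chain is $S$-stationary.

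The implication $(2)\Rightarrow(3)$ is where I expect the real difficulty, and I would argue by contraposition: assuming a nonempty collection $\Sigma$ with no $S$-maximal element, I would use dependent choice to build an ascending chain $I_0\subseteq I_1\subseteq\cdots$ in $\Sigma$, at each stage exploiting that $I_n$ is not $S$-maximal to pass to a larger member escaping a prescribed multiplier, and then show the resulting chain is not $S$-stationary, contradicting (2). The main obstacle is precisely that a merely strictly ascending chain need not fail $S$-stationarity: one can have $sI_n\subseteq I_N$ for all $n\ge N$ even when every inclusion $I_n\subsetneq I_{n+1}$ is strict, so destroying ordinary maximality does not suffice. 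To contradict (2) one must orchestrate the multipliers selected along the construction so that every candidate witness $s\in S$ is eventually defeated at every index, using the full force of ``no $S$-maximal element'' (for each $M\in\Sigma$ and each $s\in S$ there is $N\supseteq M$ in $\Sigma$ with $sN\not\subseteq M$). Arranging these choices and verifying non-$S$-stationarity is the crux, and I would carry it out along the lines of the ring-theoretic proofs in \cite{brt,as}.
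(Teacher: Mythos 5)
Your implications (3)$\Rightarrow$(1) and (1)$\Rightarrow$(2) are correct, and in fact tighter than the paper's own versions: you dispose of chains whose union meets $S$ (a case the paper silently skips, even though its definition of $S$-finite presupposes disjointness from $S$), and your direct use of the single witness $s$ in (3)$\Rightarrow$(1) avoids the per-element ambiguity that makes the paper's contradiction argument read awkwardly. Since you prove exactly the same cycle of implications as the paper, the whole comparison comes down to (2)$\Rightarrow$(3), and there your proposal has a genuine gap: you describe the difficulty, say the multipliers must be ``orchestrated,'' and then defer the construction to \cite{brt, as} without carrying it out. A proof that omits its hardest step is not a proof; as it stands your cycle is open at exactly one edge, and it is the edge the paper's later results actually need (the existence theorem for $S$-$k$-primary decompositions invokes an $S$-maximal element of a family, i.e. (1)$\Rightarrow$(3), which in this cycle factors through (2)$\Rightarrow$(3)).

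The irony is that your diagnosis of the obstacle is exactly right, and it is precisely the point the paper's own proof fudges: the paper constructs $I_{n+1}$ ``such that for each $n$ and every $s\in S$, $sI_{n+1}\not\subseteq I_n$, since $I_n$ is not $S$-maximal,'' but the negation of $S$-maximality only supplies, for each $s$, an ideal $N_s\supseteq I_n$ (depending on $s$) with $sN_s\not\subseteq I_n$; a single $I_{n+1}$ defeating all $s$ simultaneously does not follow. The ingredient that closes the gap, at least for countable $S$, is a monotonicity remark implicit in your own sketch: if $I_k\subseteq I_n$ and $sI_{n+1}\not\subseteq I_n$, then automatically $sI_{n+1}\not\subseteq I_k$, so a defeat inflicted at the top of the chain persists at every lower index. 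Hence one can fix an enumeration $S=\{s_1,s_2,\dots\}$ and a map $f:\mathbb{N}\to\mathbb{N}$ taking every value infinitely often, at step $n$ use non-$S$-maximality of $I_n$ against the single multiplier $s_{f(n)}$ to choose $I_{n+1}\supseteq I_n$ with $s_{f(n)}I_{n+1}\not\subseteq I_n$, and conclude that for every $s\in S$ and every $k$ there is some $n\ge k$ with $sI_n\not\subseteq I_k$; the chain is then not $S$-stationary, contradicting (2). For uncountable $S$ a countable chain records only countably many such defeats, so neither your sketch nor the paper's argument establishes (2)$\Rightarrow$(3) in that generality; to finish you must either supply a genuinely new idea or impose a hypothesis such as countability of $S$, under which the diagonal argument above completes your cycle.
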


\begin{proof}
\textbf{(1) $\Rightarrow$ (2):}
Suppose $R$ is $S$-Noetherian, and let 
$I_1 \subseteq I_2 \subseteq I_3 \subseteq \cdots$, be an ascending chain of ideals of $R$. Let $I = \bigcup_{n \ge 1} I_n$. Since $R$ is $S$-Noetherian, $I$ is $S$-finite; that is, there exist $s \in S$ and a finitely generated ideal $J$ such that 
$sI \subseteq J \subseteq I.$
 $J$ is finitely generated and $J \subseteq I$, there exists $k$ such that $J \subseteq I_k$. Hence for all $n \ge k$,
$sI_n \subseteq sI \subseteq J \subseteq I_k$.
Thus the chain $\{I_n\}$ is $S$-stationary.

\smallskip
\textbf{(2) $\Rightarrow$ (3):}
Assume every ascending chain of ideals is $S$-stationary. Let $\mathcal{F}$ be a nonempty family of ideals of $R$. Suppose, for contradiction, that $\mathcal{F}$ has no $S$-maximal element. Then we can construct an ascending chain
$I_1 \subsetneq I_2 \subsetneq I_3 \subsetneq \cdots, \quad I_n \in \mathcal{F}$,
such that for each $n\in\mathbb{N} $, and every $s \in S$, $sI_{n+1} \not\subseteq I_n$ (since $I_n$ is not $S$-maximal). 
By hypothesis, this chain is $S$-stationary, so there exist $k$ and $t \in S$ such that $tI_n \subseteq I_k$ for all $n \ge k$. In particular, $tI_{k+1} \subseteq I_k$, contradicting the choice of $I_{k+1}$. Hence $\mathcal{F}$ must contain an $S$-maximal element.

\textbf{(3) $\Rightarrow$ (1):}
Assume every nonempty family of ideals has an $S$-maximal element. Let $I$ be an ideal of $R$, and consider the collection
$\mathcal{G} = \{\, J \subseteq I \mid J \text{ is a finitely generated ideal of } R \,\}$.
By (3), $\mathcal{G}$ has an $S$-maximal element, say $J$. We claim that $I$ is $S$-finite. If not, then for every $s \in S$, $sI \not\subseteq J$, so there exists $x_s \in I$ such that $s x_s \notin J$. Pick one such $x \in I \setminus J$ and consider the ideal $J' = J + (x)$. Then $J'$ is finitely generated and $J \subsetneq J'$, but for all $s \in S$, $sJ' \not\subseteq J$ (since $s x \notin J$), contradicting the $S$-maximality of $J$. Therefore, there exists $s \in S$ such that $sI \subseteq J \subseteq I$. Hence $I$ is $S$-finite.
Since every ideal of $R$ is $S$-finite, $R$ is $S$-Noetherian.
\end{proof}
In \cite[Theorem~2.5]{sk}, it is shown that every $k$-irreducible ideal of an additively cancellative, yoked, and commutative Noetherian semiring is a primary ideal. 
In the present work, we extend this result by establishing its $S$-version.

\begin{Theorem} \label{2}
Let $R$ be an additive cancellative, zerosumfree, yoked, commutative $S$-Noetherian semiring. Then every $S$-$k$-irreducible ideal of $R$ is an $S$-$k$ primary ideal.
\end{Theorem}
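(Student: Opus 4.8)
The plan is to transplant the classical argument that an irreducible ideal of a Noetherian ring is primary, making the two adjustments forced by the semiring setting: every equality obtained by cancellation in a ring must instead be extracted from the $k$-ideal property of $A$, and every stabilization coming from the Noetherian hypothesis now carries a factor from $S$, supplied by $S$-stationarity (Lemma~\ref{1}).

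Let $A$ be $S$-$k$-irreducible. By definition it is already a $k$-ideal with $A\cap S=\varnothing$, so only the $S$-primary multiplicative condition remains to be checked. Fix $a,b\in R$ with $ab\in A$. I would consider the ascending chain of colon ideals $(A:b)\subseteq (A:b^{2})\subseteq\cdots$, where $(A:x)=\{r\in R: rx\in A\}$. Since $R$ is $S$-Noetherian, Lemma~\ref{1} makes this chain $S$-stationary, so there exist $n\in\mathbf{N}$ and $t\in S$ with $t\,(A:b^{n+1})\subseteq (A:b^{n})$. I would then pass to the $k$-closures $I=\overline{A+(a)}$ and $J=\overline{A+(b^{n})}$, the smallest $k$-ideals containing $A+(a)$ and $A+(b^{n})$; these are genuine $k$-ideals, so the definition of $S$-$k$-irreducibility applies to them, and plainly $A\subseteq I\cap J$ while $a\in I$ and $b^{n}\in J$.

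The decisive step is to prove $t\,(I\cap J)\subseteq A$, which plays the role of the ring-theoretic identity $A=(A+(a))\cap(A+(b^{n}))$. Given $x\in I\cap J$, I would first use $x\in \overline{A+(a)}$ together with $ab\in A$ to deduce $xb\in A$: writing closure membership as $x+p,\,p\in A+(a)$ and multiplying by $b$ sends both $x+p$ and $p$ into $A$, so the $k$-ideal property of $A$ yields $xb\in A$. Next I would unwind $x\in\overline{A+(b^{n})}$, writing $x+q=v_1+s_1b^{n}$ and $q=v_2+s_2b^{n}$ with $v_i\in A$; multiplying by $b$ and invoking the yoked hypothesis to compare the two multiples of $b^{n+1}$, additive cancellativity together with the $k$-ideal property of $A$ reduces these equations to $cb^{n+1}\in A$ (so $c\in(A:b^{n+1})$) and $x+v_2=v_1+cb^{n}$ for a suitable $c\in R$. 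Multiplying the last relation by $t$ and using $t\,(A:b^{n+1})\subseteq(A:b^{n})$ gives $tcb^{n}\in A$, whence the $k$-ideal property forces $tx\in A$. This is exactly where additive cancellativity and yokedness are indispensable, and I expect it to be the main obstacle: the passage to $k$-closures (needed only so that the definition of $S$-$k$-irreducibility can be invoked at all) introduces the auxiliary elements $p,q$, and the whole difficulty is to eliminate them and recover the clean containment $t\,(I\cap J)\subseteq A$.

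With $t\,(I\cap J)\subseteq A\subseteq I\cap J$ in hand, $S$-$k$-irreducibility supplies $s'\in S$ with $ts'I\subseteq A$ or $ts'J\subseteq A$. Setting $s=ts'\in S$, the first case gives $sa\in A$ since $a\in I$, while the second gives $sb^{n}\in A$ since $b^{n}\in J$, and then $(sb)^{n}=s^{\,n-1}(sb^{n})\in A$ shows $sb\in\sqrt{A}$. In either case the element $s\in S$ witnesses that $ab\in A$ implies $sa\in A$ or $sb\in\sqrt{A}$; as $A$ is a $k$-ideal disjoint from $S$, it is $S$-$k$-primary.
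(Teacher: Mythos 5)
Your proposal is correct and follows essentially the same route as the paper's proof: both run the classical Noether argument, stabilizing an ascending chain of colon ideals by powers of one factor via $S$-stationarity (Lemma~\ref{1}), sandwiching $A$ as $t(I\cap J)\subseteq A\subseteq I\cap J$ with $I,J$ built from $A$ plus the two factors, and then invoking $S$-$k$-irreducibility (you merely swap the roles of $a$ and $b$). If anything, your version is tidier on one point: you take $I$ and $J$ to be $k$-closures from the outset and verify $t(I\cap J)\subseteq A$ directly for closure elements, using yokedness and additive cancellativity explicitly, whereas the paper proves $s(I\cap J)\subseteq A$ for the non-closed ideals $A+\langle a^{k}\rangle$, $A+\langle b\rangle$ and then passes to $\overline{I}\cap\overline{J}$ with less justification.
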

\begin{proof}
Let $R$ be an additive cancellative, zerosumfree, yoked, and commutative $S$-Noetherian semiring, and let $A$ be an $S$-$k$-irreducible ideal of $R$. 
Suppose $a,b \in R$ satisfy $ab \in A$, and assume that $sb \notin A$ for every $s \in S$, where $S$ is a multiplicatively closed subset of $R$ with $S \cap A = \varnothing$. 
Under these conditions, we assert that there exists an element $r \in S$ such that $ra \in \sqrt{A}$.

  \
  
 We define two ideals of $R$ as
$I = A + \langle a^{k} \rangle$ and $J = A + \langle b \rangle$.
It is immediate that
$A \subseteq I \cap J$. Let us consider
$B_n=\{\,x \in R : a^{n}x \in A\,\}, $ $ n \in \mathbb{N}$.
For each $n$, the set $B_n$ is an ideal of $R$, and the sequence
$B_1 \subseteq B_2 \subseteq B_3 \subseteq \cdots$
is a non-decreasing chain of ideals. Since $R$ is $S$-Noetherian, Lemma \autoref{1}, ensures that this chain becomes $S$-stationary. Thus, there exist an index $k \in \mathbb{N}$ and an element $s \in S$ such that
$sB_n \subseteq B_k $ for every $ n \ge k$.
Let $y \in I \cap J$. Then we may write $y = a_0 + a^{k} z$ for some $z \in R$ and $a_0 \in A$. Since $ab \in A$, we obtain $aJ = aA + a\langle b\rangle \subseteq A$. Because $y \in J$, multiplying by $a$ gives $ay = aa_0 + a^{k+1} z \in A$. As $A$ is a $k$-ideal of $R$, the condition $ay \in A$ implies $a^{k+1} z \in A$, and hence $z \in B_{k+1}$. Therefore, $sz \in sB_{k+1} \subseteq B_k$, and Consequently, $a^{k} sz \in A$. It follows that $sy =  sa_0 +a^{k} sz  \in A$.
From the above, we obtain $s(I \cap J) \subseteq A \subseteq I \cap J$. Let $\bar{A}$ denote the $k$-closure of an ideal $A$, defined by $\overline{A} = \{\, a \in R : a + b = c \text{ for some } b, c \in A \,\}$. A standard property of the $k$-closure states that $\overline{(I \cap J)} \subseteq \overline{I} \cap \overline{J}$. Since $A \subseteq I \cap J$ and $A$ is a $k$-ideal, it follows that $A = \overline{A} \subseteq \overline{I} \cap \overline{J}$. Therefore $s(\overline I \cap \overline J) \subseteq \overline{A} \subseteq \overline{I} \cap \overline{J}$.
Since $A$ is $S$-$k$-irreducible, there exists $s'\in S$ such that either
$ss'\overline{I} \subseteq A $ or $ ss'\overline{J} \subseteq A.$
If $ss'\overline{J} \subseteq A$, then $ss'b \in A$, which is impossible. Therefore,
$ss'\overline{I} \subseteq A.$
Hence $ss'a^{k} \in A$ for some $k\in\mathbb{N}$. Let $r = ss' \in S$. Then $ra^{k}\in A$ implies
$(ra)^{k} = r^{\,k-1}(ra^{k}) \in A$.
Thus,
$ra \in \sqrt{A}.$

  \end{proof}
 As observed in \cite{bs1}, the notion of an $S$-maximal ideal in a semiring was introduced as a natural extension of the classical concept of maximal ideals.  Moreover, it is shown in \cite{ea} that $R$ possesses at least one $k$-maximal ideal.

  Now we define $S$-$k$-maximal ideal in a  semiring $R$.

  \begin{Definition}
     Let $R$ be a semiring, $S\subsetneq R$ be a multiplicative set, and $M$ (disjoint with $S$) be a $k$-ideal of $R$. Then $M$ is said to be $S$-$k$-maximal if there exists $s\in S$ such that whenever $M\subseteq I$ for some ideal $I$, then either $sI\subseteq M$ or $I\cap S \neq \varnothing$. 
  \end{Definition}

\begin{Theorem}  (Existence of $S$-$k$-primary decomposition)
    Let $R$ be an $S$-Noetherian semiring. Then every proper $k$-ideal (disjoint with $S$) of $R$ can be written as a finite intersection of $S$-$k$-primary ideals.
\end{Theorem}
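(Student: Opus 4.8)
The plan is to mimic the classical Noetherian existence proof for primary decomposition, but carried out in the $S$-framework using the machinery already established in the excerpt. The strategy is a standard noetherian-induction (or "maximal counterexample") argument: I would consider the collection $\mathcal{F}$ of all proper $k$-ideals of $R$, disjoint from $S$, that cannot be written as a finite intersection of $S$-$k$-primary ideals, and aim to derive a contradiction by showing $\mathcal{F}$ is empty. First I would assume for contradiction that $\mathcal{F}\neq\varnothing$. By Lemma~\ref{1}, part (3), since $R$ is $S$-Noetherian, the nonempty family $\mathcal{F}$ contains an $S$-maximal element, say $A$. The goal is then to show that this $A$ must itself be expressible as a finite intersection of $S$-$k$-primary ideals, contradicting $A\in\mathcal{F}$.

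The next step is to exploit the bridge provided by Theorem~\ref{2}: in an additively cancellative, zerosumfree, yoked, commutative $S$-Noetherian semiring, every $S$-$k$-irreducible ideal is $S$-$k$-primary. (I would state these ambient hypotheses on $R$ explicitly, since Theorem~\ref{2} requires them; the existence theorem as stated only says "$S$-Noetherian," so I would either add these standing assumptions or note they are in force.) Given this, it suffices to show that every proper $k$-ideal disjoint from $S$ is a finite intersection of $S$-$k$-irreducible ideals. So I would refine the contradiction hypothesis: take $A$ to be $S$-maximal among $k$-ideals that are \emph{not} finite intersections of $S$-$k$-irreducible ideals. Now $A$ itself cannot be $S$-$k$-irreducible (else it is trivially such an intersection of length one). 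By the negation of the $S$-$k$-irreducibility condition, there exist $k$-ideals $I,J$ with $s(I\cap J)\subseteq A\subseteq(I\cap J)$ for some $s\in S$, yet for every $s'\in S$ neither $ss'I\subseteq A$ nor $ss'J\subseteq A$.

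From here the key step is to argue that $I$ and $J$ (or suitable $k$-closures $\overline{I},\overline{J}$) strictly $S$-dominate $A$ in the sense required to invoke $S$-maximality, so that both $I$ and $J$ lie outside the counterexample family and hence each \emph{is} a finite intersection of $S$-$k$-irreducible ideals. Then $A$, being sandwiched as $s(I\cap J)\subseteq A\subseteq I\cap J$, would itself be expressible (up to the $S$-factor $s$) as such an intersection, giving the contradiction. I expect the main obstacle to lie precisely here: in the classical proof one writes $A=I\cap J$ with both strictly larger, but in the $S$-setting the relation is only $s(I\cap J)\subseteq A\subseteq I\cap J$, so $A$ need not equal $I\cap J$ on the nose. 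The delicate point is to show that the two-sided $S$-sandwich is enough to transfer a decomposition of $I\cap J$ (equivalently of $\overline{I}\cap\overline{J}$) back to $A$ while respecting disjointness from $S$ and the $k$-ideal property; this requires carefully checking that the $S$-factors accumulated along the way stay inside $S$ (using $1\in S$ and closure under products) and that finite intersections of $S$-$k$-primary ideals remain $k$-ideals disjoint from $S$, for which I would lean on Proposition~2.1 and the standard fact that a finite intersection of $k$-ideals is a $k$-ideal. Managing the interplay between the $k$-closure operation $\overline{(\;\cdot\;)}$ and the multiplicative factors from $S$ is the technical heart of the argument.
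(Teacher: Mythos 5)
Your overall architecture --- a maximal counterexample chosen via Lemma~\ref{1}(3) from the family of $k$-ideals that are not finite intersections of $S$-$k$-irreducible ideals, followed by Theorem~\ref{2} to convert ``$S$-$k$-irreducible'' into ``$S$-$k$-primary'' --- is exactly the paper's architecture, and your side remark that Theorem~\ref{2} requires the additively cancellative, zerosumfree, yoked hypotheses (which the existence theorem as stated omits) is a correct and worthwhile observation. The step where you push $I$ and $J$ out of the counterexample family is also sound: if, say, $J$ were in the family, $S$-maximality of $A$ would give $t\in S$ with $tJ\subseteq A$, hence $stJ\subseteq A$, contradicting that $ss'J\not\subseteq A$ for every $s'\in S$.

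However, there is a genuine gap exactly at the point you label ``the technical heart'' and then leave unresolved. Using the honest negation of $S$-$k$-irreducibility you obtain only the sandwich $s(I\cap J)\subseteq A\subseteq I\cap J$; decomposing $I$ and $J$ then produces a finite intersection of $S$-$k$-irreducible ideals that \emph{contains} $A$ and whose $s$-multiple is \emph{contained in} $A$ --- not one that \emph{equals} $A$. The theorem asserts exact equality: $A$ itself must be a finite intersection of $S$-$k$-primary ideals. Nothing in your outline converts the sandwich into an equality, and in general it cannot be converted: the sandwich is strictly weaker (in $\mathbb{Z}_0^+$ with $S=\{2^n\}$, the ideal $\langle 12\rangle$ satisfies $2\langle 6\rangle\subseteq\langle 12\rangle\subseteq\langle 6\rangle$ without equalling $\langle 6\rangle$). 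So your route, carried to completion, proves only a weakened ``up to an $S$-factor'' version of the statement. The paper closes this hole by a different move: it asserts that an ideal which is not $S$-$k$-irreducible is in particular not irreducible as a $k$-ideal, and hence can be written \emph{exactly} as $A=J\cap K$ with $A\subsetneq J$ and $A\subsetneq K$; with exact equality, the decompositions of $J$ and $K$ pass directly to $A$ and the contradiction is immediate. (Whether that assertion is itself fully justified is a separate issue --- it amounts to the unproved claim that every irreducible $k$-ideal is $S$-$k$-irreducible --- but it is the step the paper relies on, and your proposal offers no substitute for it.)
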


\begin{proof}

Let $E$ denote the family of all $k$-ideals of $R$ which are disjoint from $S$ and which cannot be represented as a finite intersection of $S$-$k$-irreducible ideals. Our aim to prove that $E=\varnothing$.
Assume, contrary to our claim, that $E\neq\varnothing$. Since $R$ is an $S$-Noetherian semiring, by Lemma \autoref{1}, ensure that $E$ possesses an $S$-$k$-maximal element; denote this ideal by $I$. By the defining property of $E$, the ideal $I$ is not $S$-$k$-irreducible, and thus it is not irreducible as a $k$-ideal.
Consequently, there exist $k$-ideals $J$ and $K$ of $R$ such that
$I = J \cap K,$
with $I\neq J$ and $I\neq K$. 
Since $I \subseteq J \cap K$, it follows that $s(J \cap K) \subseteq I \subseteq J \cap K$ for every $s \in S$. Because $R$ is an $S$-Noetherian semiring, all ideals of $R$ are $S$-finite. Moreover, $I$ is not $S$-$k$-irreducible; therefore, for any $s, s' \in S$, we must have $ss'J \nsubseteq I$ and $ss'K \nsubseteq I$.
We now argue that neither $J$ nor $K$ lies in the set $E$. Assume the opposite: suppose both $J$ and $K$ belong to $E$. Since $I$ is an $S$-$k$-maximal element of $E$, there exist some $s, s' \in S$ such that $sJ \subseteq I$ and $s'K \subseteq I$, while $I \subsetneq J$ and $I \subsetneq K$. This contradicts the maximality of $I$ in the set $E$.
 Thus we conclude that $J, K \notin E$.
Consequently, both $J$ and $K$ admit finite decompositions into $S$-$k$-irreducible ideals. Since $I = J \cap K$, it follows that $I$ itself can be expressed as a finite intersection of $S$-$k$-irreducible ideals. This contradicts the assumption that $I$ lies in $E$.
Therefore, $E$ must be empty. Hence every proper $k$-ideal of $R$ has a finite decomposition into $S$-$k$-irreducible ideals. By Theorem~\autoref{2}, every $S$-$k$-irreducible ideal is $S$-$k$-primary. Consequently, any $k$-ideal of $R$ can be expressed as a finite intersection of 
$S$-$k$-primary ideals.

 \end{proof}
 The following result arises as a direct consequence of this theorem.
 
\begin{Corollary}
If $R$ is an $S$-Noetherian semiring, then radical of $k$-ideal $I$ of $R$ with $I \cap S = \varnothing$ can be represented as the finite intersection of $S$-prime ideals.
\end{Corollary}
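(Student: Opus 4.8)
The plan is to read this off the preceding existence theorem once one knows that the radical of an $S$-$k$-primary ideal is $S$-prime. First I would apply the existence theorem: since $R$ is $S$-Noetherian and $I$ is a proper $k$-ideal with $I\cap S=\varnothing$, there is a finite decomposition $I=\bigcap_{i=1}^{n}A_i$ in which each $A_i$ is an $S$-$k$-primary ideal of $R$ with $A_i\cap S=\varnothing$. Taking radicals and using the identity $\sqrt{\bigcap_{i=1}^{n}A_i}=\bigcap_{i=1}^{n}\sqrt{A_i}$ (which holds in a commutative semiring by the same elementary argument already used in this subsection, since $x^{m}\in\bigcap A_i$ for a common exponent $m$ iff $x\in\bigcap\sqrt{A_i}$, no additive cancellation being required), I obtain $\sqrt{I}=\bigcap_{i=1}^{n}\sqrt{A_i}$. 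Writing $P_i:=\sqrt{A_i}$, it remains only to check that every $P_i$ is an $S$-prime ideal.

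For the crux step, fix $i$ and let $s\in S$ be the witness from the $S$-$k$-primary condition for $A_i$, so that $ab\in A_i$ forces $sa\in A_i$ or $sb\in\sqrt{A_i}$. Suppose $ab\in P_i=\sqrt{A_i}$; then $a^{m}b^{m}=(ab)^{m}\in A_i$ for some $m\in\mathbb{N}$. Applying the condition to the product $a^{m}\cdot b^{m}$ yields either $sa^{m}\in A_i$ or $sb^{m}\in\sqrt{A_i}$. In the first case $(sa)^{m}=s^{\,m-1}(sa^{m})\in A_i$, so $sa\in P_i$; in the second case $(sb^{m})^{\ell}=s^{\ell}b^{m\ell}\in A_i$ for some $\ell$, whence $(sb)^{m\ell}=s^{\ell(m-1)}(sb^{m})^{\ell}\in A_i$ and so $sb\in P_i$. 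Thus the single element $s$ witnesses that $ab\in P_i$ implies $sa\in P_i$ or $sb\in P_i$, which is exactly the $S$-prime condition. Finally $P_i\cap S=\varnothing$, for if some $t\in\sqrt{A_i}\cap S$ then a power $t^{N}\in A_i\cap S$, contradicting $A_i\cap S=\varnothing$; hence each $P_i$ is a genuine $S$-prime ideal.

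Putting these together gives $\sqrt{I}=\bigcap_{i=1}^{n}P_i$, a finite intersection of $S$-prime ideals, as claimed. The only substantive point is the crux step, namely that passing to the radical converts the $S$-$k$-primary condition into the $S$-prime condition with the \emph{same} witness $s$; the accompanying work is merely the bookkeeping needed to keep the exponents $m$ and $\ell$ consistent while pulling $s$ through powers, which is routine in a commutative semiring. Everything else is a direct invocation of the existence theorem together with the stability of radicals under finite intersection used above.
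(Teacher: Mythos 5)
Your proof is correct and takes essentially the same route as the paper: invoke the existence theorem to write $I=\bigcap_{i=1}^{n}A_i$ with each $A_i$ $S$-$k$-primary, take radicals to get $\sqrt{I}=\bigcap_{i=1}^{n}\sqrt{A_i}$, and conclude because the radical of an $S$-primary ideal is $S$-prime. The only difference is that you prove this last fact (with a fixed witness $s$) and the disjointness $\sqrt{A_i}\cap S=\varnothing$ in full, where the paper simply asserts them, and your argument cleanly works with $\sqrt{I}$ throughout instead of the paper's superfluous assumption that $I$ is a radical ideal.
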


\begin{proof}
Let $R$ be an $S$-Noetherian semiring and a $k$-ideal $I$ disjoint with $S$.  
Since $R$ is $S$-Noetherian, every $k$-ideal of $R$ admits a finite $S$-$k$-primary decomposition. 
Hence, there exist $S$-$k$-primary ideals $A_1, A_2, \ldots, A_r$ such that 
$I = A_1 \cap A_2 \cap \cdots \cap A_r$.
Now, since $I$ is a radical ideal, we have $I = \sqrt I$. Taking radicals on both sides gives $I = \sqrt I= \sqrt A_1 \cap \sqrt A_2 \cap \cdots \cap \sqrt A_r$.
As the radical of an $S$-primary ideal is $S$-prime, each $\sqrt A_i$ is an $S$-prime ideal. Let $P_i =\sqrt A_i$ for $i = 1,2,\ldots,r$. Thus, \[I = P_1 \cap P_2 \cap \cdots \cap P_r,\] which represents $I$ as a finite intersection of $S$-prime ideals.
\end{proof}
    
\begin{Remark} (\cite{bs})
    Radical of a $S$-$k$-primary ideal may not be $S$-$k$-prime ideal in a semiring $R$.
\end{Remark} 

Recall that if $R$ is a commutative semiring and $I$ is an ideal of $R$, then for any 
$s \in R$ define
$I : s = \{\, x \in R : sx \in I \,\}$.
For each $s \in R$, the set $I : s$ is an ideal of $R$.

\begin{Proposition}
      
Let $f : R \to R'$ be an homomorphism of commutative semirings such that $f(S)$ does not contain
zero. 
If $A$ is an $f(S)$-$k$-primary ideal of $R'$, 
then $(A:f(s))$ is a $k$-primary ideal of $R'$ and $f(f^{-1}(A):s) \subseteq (A:f(s))$, where $s\in S$.

\end{Proposition}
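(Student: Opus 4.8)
The plan is to fix the witness of the hypothesis and then verify the three assertions in turn — that $B:=(A:f(s))$ is a $k$-ideal, that $B$ is primary, and that the stated containment holds — treating the middle one as the only substantive point. Choose $s\in S$ so that $t:=f(s)$ is the element of $f(S)$ witnessing that $A$ is $f(S)$-$k$-primary. I first record the standing facts: $f(S)$ is a multiplicative subset of $R'$ with $0\notin f(S)$, and $A\cap f(S)=\varnothing$, so that $t\notin A$ and indeed no power of $t$ lies in $\sqrt{A}$ (a power of $t$ in $\sqrt{A}$ would force some $t^{m}\in A\cap f(S)$). As recalled just before the proposition, $B=(A:t)=\{x\in R':tx\in A\}$ is an ideal of $R'$, and it is proper, since $B=R'$ would give $t=t\cdot 1\in A$. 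That $B$ is a $k$-ideal is then immediate: if $x+y\in B$ and $x\in B$, then $tx+ty=t(x+y)\in A$ and $tx\in A$, whence $ty\in A$ because $A$ is a $k$-ideal, i.e. $y\in B$.

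For the primary condition, take $a,b\in R'$ with $ab\in B$, so $tab\in A$. Reading this as $(ta)\,b\in A$ and invoking the defining property of $A$ with witness $t$ yields either $t(ta)=t^{2}a\in A$ or $tb\in\sqrt{A}$. \emph{This is where the main difficulty sits.} To conclude $a\in B$ I must pass from $t^{2}a\in A$ to $ta\in A$, and to conclude $b\in\sqrt{B}$ (using $\sqrt{A}\subseteq\sqrt{B}$, since $A\subseteq B$) I must pass from $(tb)^{m}=t^{m}b^{m}\in A$ to some $tb^{k}\in A$; neither reduction is supplied by the $f(S)$-$k$-primary hypothesis, because each application of it introduces a fresh factor of the witness $t$. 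The clean way to absorb these surplus powers is to prove the primary property for the $t$-saturated colon $(A:t^{\infty})=\bigcup_{n\ge 1}(A:t^{n})$: the identical case analysis there gives $a\in(A:t^{n+1})$ or $b^{m}\in(A:t^{m})$, hence cleanly $a\in(A:t^{\infty})$ or $b\in\sqrt{(A:t^{\infty})}$, and $(A:t^{\infty})$ is $k$-primary verbatim. Matching this to the literal ideal $(A:t)$ of the statement requires the extra input $(A:t)=(A:t^{2})$ (which then forces the whole chain $(A:t)\subseteq(A:t^{2})\subseteq\cdots$ to be constant and equal to $(A:t^{\infty})$); verifying this stabilisation at the first step — it does \emph{not} follow from mere $S$-Noetherianity, which only yields an $S$-stationary chain — is the delicate point one must either establish separately or build into the hypotheses on $t$. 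Granting it, the case analysis above shows $a\in B$ or $b\in\sqrt{B}$, so $B$ is primary and, with the previous paragraph, $k$-primary.

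Finally, the containment is purely formal and holds for every $s\in S$, so I would dispatch it last. If $x\in(f^{-1}(A):s)$, then $sx\in f^{-1}(A)$, so $f(s)f(x)=f(sx)\in A$, which says exactly $f(x)\in(A:f(s))$. As $x$ ranges over $(f^{-1}(A):s)$, this gives $f\bigl((f^{-1}(A):s)\bigr)\subseteq(A:f(s))$, completing the proposition.
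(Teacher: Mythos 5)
Your $k$-ideal argument and your final containment are both correct (the containment is exactly the paper's computation, and in fact cleaner: the paper writes $f(f^{-1}(A)) = A$, which would need surjectivity of $f$, whereas you only use $f(f^{-1}(A)) \subseteq A$). But the middle claim --- that $B = (A:f(s))$ is primary --- is only proved conditionally in your write-up: you reduce everything to the stabilisation $(A:t) = (A:t^{2})$, declare it ``the delicate point one must either establish separately or build into the hypotheses on $t$'', and then proceed by ``granting it''. That is a genuine gap, and moreover your diagnosis of it is wrong: the stabilisation needs neither $S$-Noetherianity nor any extra hypothesis on $t$; it follows in one line from the $f(S)$-primary property itself, using a fact you already recorded. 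Given $x$ with $t^{n}x \in A$, apply the defining property of $A$ (with witness $t$) to the factorisation $x \cdot t^{n} \in A$, taking the two factors to be $x$ and $t^{n}$: either $tx \in A$, or $t\cdot t^{n} \in \sqrt{A}$. The second alternative is impossible because, as you noted at the outset, no power of $t$ lies in $\sqrt{A}$. Hence $tx\in A$, so $(A:t^{n}) = (A:t)$ for every $n\ge 1$. The trick your case analysis misses is to place \emph{all} powers of the witness in the second factor, where the conclusion lands in $\sqrt{A}$ and is killed by disjointness from $f(S)$; your factorisations $(ta)b$ and $a(tb)$ instead put the surplus powers of $t$ in the slot whose conclusion lands in $A$, which is why each application of the hypothesis generates a fresh unabsorbable $t$. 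With the stabilisation in hand, your own argument closes: $t^{2}a\in A$ gives $a\in(A:t^{2})=(A:t)$, and $t^{m}b^{m}\in A$ gives $b^{m}\in(A:t^{m})=(A:t)$, so $B$ is primary, and your saturation $(A:t^{\infty})$ collapses to $(A:t)$ as you wanted.

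For comparison, the paper does none of this work: it simply invokes Proposition 2.9 of Bhowmick and Goswami (restated later in the paper as the equivalence that $A$ is $S$-primary if and only if $(A:s)$ is primary for some $s\in S$), applied to the multiplicative set $f(S)\subseteq R'$, so the first assertion is an instance of a cited result; the only computation in the paper's proof is the containment, which you handled correctly. Your blind attempt is essentially a reproof of that cited result from scratch --- a legitimate and more self-contained route --- but as written it is incomplete at exactly its central step.
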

\begin{proof}
Since $A$ is an $f(S)$-$k$-primary ideal of $R'$, it follows that $A$ is a $k$-primary ideal. 
Hence, by [\cite{bs}, Proposition 2.9], there exists $s \in S$ such that $(A : f(s))$ is a $k$-primary ideal of $R'$.

Let $x \in (f^{-1}(A) : s)$. It follows that 
$f(x) \in  f(f^{-1}(A) : s)$. Again $x \in (f^{-1}(A) : s)$. Then $x s \in f^{-1}(A)$ for some $s \in S$. 
Applying $f$ to both sides, we obtain $f(x s) \in f(f^{-1}(A)) = A$. 
This implies that $f(x) f(s) \in A$, and consequently $f(x) \in (A : f(s))$.
Therefore,
$f(f^{-1}(A) : s) \subseteq f(f^{-1}(A)) : f(s)$.
Moreover, equality holds when $f$ is the identity mapping, that is, when $f(x) = x$.
Hence
$f(f^{-1}(A) : s) \subseteq f(f^{-1}(A)) : f(s) = (A : f(s))$.

\end{proof}
\begin{Note}

If $f : R \to R'$ is isomorphism then,  $f(f^{-1}(A):s)= f(f^{-1}(A)):f(s))$.

Let $y \in (A : f(s))$. Then there exists $y_1 \in R$ such that $f(y_1) = y$.  
Since $f(y_1) \in (A : f(s))$, we have 
$f(y_1) f(s) \in A$. 
Hence
$f(y_1 s) \in A = f(f^{-1}(A))$. 
This shows that $y_1 s \in f^{-1}(A)$. Therefore, $y_1 \in (f^{-1}(A) : s)$, which implies $f(y_1) \in f(f^{-1}(A) : s)$.
Thus, $y \in f(f^{-1}(A) : s)$.  
Consequently, we obtain
$f(f^{-1}(A)) : f(s) \subseteq f(f^{-1}(A) : s)$
and so, we get   $f(f^{-1}(A):s)= f(f^{-1}(A)):f(s))$.
\end{Note}
\begin{Proposition}

  Let $A$ be an $S$-$k$-primary ideal of $R$, and let $x \in R$ satisfy 
$sx \notin A$ for every $s \in S$.
 Then $(A:sx)$ is a $S$-$k$-primary ideal of $R$, and there exists $r\in S $ such that, $r\sqrt{(A:sx)}\subseteq\sqrt{A} \subseteq\sqrt{(A:sx)}$,  for all $s\in S$.
\end{Proposition}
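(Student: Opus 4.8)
The plan is to fix, once and for all, an element $s_0 \in S$ witnessing that $A$ is $S$-$k$-primary, so that $ab \in A$ always forces $s_0 a \in A$ or $s_0 b \in \sqrt{A}$; this same $s_0$ will serve both as the primality witness and as the element $r$. First I would dispose of the structural requirements. The set $(A:sx) = \{\, y \in R : sxy \in A \,\}$ is an ideal by the remark preceding the statement, and it is a $k$-ideal because $A$ is one: if $y_1 + y_2$ and $y_1$ both lie in $(A:sx)$, then $sx(y_1 + y_2)$ and $sxy_1$ lie in $A$, so $sxy_2 \in A$ by the $k$-property of $A$, giving $y_2 \in (A:sx)$. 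Disjointness from $S$ is immediate from the hypothesis: were some $t \in S$ in $(A:sx)$, then $sxt = (st)x \in A$ with $st \in S$, contradicting $sx \notin A$ for every element of $S$.

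Next I would record the easy inclusion $A \subseteq (A:sx)$ (every $a \in A$ satisfies $sxa \in A$), which upon taking radicals yields the right-hand inclusion $\sqrt{A} \subseteq \sqrt{(A:sx)}$. With this in hand, the $S$-$k$-primary property follows quickly using $s_0$ as witness: if $ab \in (A:sx)$, i.e.\ $(sxa)b \in A$, then applying the defining property of $A$ gives either $s_0(sxa) \in A$, whence $sx(s_0 a) \in A$ and $s_0 a \in (A:sx)$, or else $s_0 b \in \sqrt{A} \subseteq \sqrt{(A:sx)}$. Either way the required alternative holds, so $(A:sx)$ is $S$-$k$-primary.

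The substantive part is the remaining inclusion $s_0 \sqrt{(A:sx)} \subseteq \sqrt{A}$. Given $y \in \sqrt{(A:sx)}$, I would pick $m \in \mathbb{N}$ with $y^m \in (A:sx)$, that is $(sx)\,(y^m) \in A$, and feed the product $(sx)\cdot y^m$ into the $S$-$k$-primary condition of $A$. This returns either $s_0(sx) \in A$ or $s_0 y^m \in \sqrt{A}$. The first option reads $(s_0 s)x \in A$ with $s_0 s \in S$, which is exactly forbidden by the hypothesis $sx \notin A$ for all $s \in S$; hence it is impossible. So $s_0 y^m \in \sqrt{A}$, giving $s_0^{\,p} y^{mp} \in A$ for some $p$, and then $(s_0 y)^{mp} = s_0^{\,mp-p}\bigl(s_0^{\,p} y^{mp}\bigr) \in A$ since $m \ge 1$, whence $s_0 y \in \sqrt{A}$.

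The hard part, and the only place the full strength of the hypothesis is needed, is precisely this last step: the maneuver of treating $(sx)\cdot y^m$ as a product and invoking the primary condition of $A$, where the assumption $sx \notin A$ for every $s \in S$ is exactly what kills the unwanted alternative $s_0(sx) \in A$ and forces the radical membership of $y$. I would also note that $r = s_0$ can be chosen independently of $s$, so the quantifier arrangement in the statement is met without difficulty.
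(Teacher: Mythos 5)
Your proof is correct and follows essentially the same route as the paper's: disjointness of $(A:sx)$ from $S$ via the hypothesis, the $k$-ideal property inherited from the $k$-property of $A$, the $S$-primary property by feeding $sxab$ into the defining condition of $A$, and the inclusion into $\sqrt{A}$ by applying that condition to the product $(sx)\cdot y^{m}$ and using $sx \notin A$ for all $s \in S$ to eliminate the unwanted alternative. If anything, your version is tidier on one point: by fixing a single uniform witness $s_0$ at the outset you make explicit that $r$ can be chosen independently of the element of $\sqrt{(A:sx)}$ and of $s$, a quantifier issue the paper's proof glosses over (its witness $r$ a priori depends on the element $c \in \sqrt{(A:sx)}$).
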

\begin{proof}
Assume first that $(A:sx) \cap S \neq \varnothing$ for some $s \in S$. Then there exists $r \in S$ with $r \in (A:sx)$, which means $rsx \in A$. This contradicts the fact that $A$ is an $S$-primary ideal and $sx \notin A$. Hence $(A:sx) \cap S = \varnothing$ for every $s \in S$.
Now take $s \in S$ and suppose $ab \in (A:sx)$ with the additional assumption that $s'a \notin \sqrt{(A:sx)}$ for all $s' \in S$. From $ab \in (A:sx)$, we obtain $sxab \in A$. Since no element of the form $s'a$ lies in $\sqrt{(A:sx)}$, the $S$-primary property of $A$ ensures that there exists $s'' \in S$ such that $s''sxb \in A$, and consequently $s''b \in (A:sx)$. Thus $(A:sx)$ is an $S$-primary ideal.
Next we verify that $(A:sx)$ is a $k$-ideal. Clearly $(A:sx) \subseteq \overline{(A:sx)}$. Let $a \in \overline{(A:sx)}$. By definition of the $k$-closure, there exists $b \in (A:sx)$ such that $a+b \in (A:sx)$. This means $sx(a+b) \in A$ with $sxb \in A$. Hence $sxa + sxb \in A$, and since $A$ is a $k$-ideal, it follows that $sxa \in A$. Therefore $a \in (A:sx)$, showing that $(A:sx)=\overline{(A:sx)}$, and hence $(A:sx)$ is a $k$-ideal of $R$.
Finally, let $c \in \sqrt{(A:sx)}$. Then $c^{n} \in (A:sx)$ for some $n \in \mathbb{N}$, which gives $sxc^{n} \in A$. Since $A$ is $S$-$k$-primary and $rsx \notin A$ for all $r \in S$, there exists an element $r \in S$ such that $(rc^{n})^{k} \in A$ for some $k \in \mathbb{N}$. Thus $(rc)^{nk} = r^{nk-k}(rc^{n})^{k} \in A$, showing that $rc \in \sqrt{A}$. Therefore $r\,\sqrt{(A:sx)} \subseteq \sqrt{A}$. Since $A \subseteq (A:sx)$, we also have $\sqrt{A} \subseteq \sqrt{(A:sx)}$, and hence $r\,\sqrt{(A:sx)} \subseteq \sqrt{A} \subseteq \sqrt{(A:sx)}$.

\end{proof}
Localization plays a central role in commutative algebra. Although localization techniques do not carry over directly to the setting of commutative semirings, several important aspects remain effective and applicable. In this section, we consider semirings obtained through localization and show how these constructions can be used to establish the uniqueness theorem for decomposition.
Recall from \cite{nas}, 
$f : R \longrightarrow S^{-1}R$
be the canonical semiring homomorphism defined by $f(x) = \frac{x}{1}$.  
For any ideal $I$ of $R$, the preimage $f^{-1}(S^{-1}I)$ is called the contraction of $I$ with respect to $S$, and it is denoted by
$S(I)=\{\, x \in R \mid \tfrac{x}{1} \in S^{-1}I \,\}.$
Clearly, $I \subseteq S(I)$. Therefore, to investigate ideal decompositions compatible with localization, we require an $S$-version of primary decomposition of ideals in  additive cancellative semiring, which generalizes the classical concept of primary decomposition.

Now we define S-version of minimal decomposition in an additive cancellative, zerosumfree, yoked, commutative semiring R. Let $S \subsetneq R$ is a multiplicatively
closed subset.
Consider a $k$-ideal $I \subseteq R$ satisfying $I \cap S = \varnothing$. We say that $I$ possesses an $S$-$k$-primary decomposition if it can be expressed as the intersection of
finitely many $S$-$k$-primary ideals of $R$. Whenever such a representation
exists, the ideal $I$ is called $S$-$k$-decomposable.
Suppose that
$I=\bigcap_{i=1}^{n} A_i$, for  $i \in\{1,2,\ldots,n\}$,
where each $A_i$ is an $S$-$k$-primary ideal and $\sqrt{(A_i)}=P_i$.
This decomposition is called \emph{minimal} if the following conditions hold:

 (1) $S(P_i)\neq S(P_j)$ for all distinct $i,j\in\{1,2,\ldots,n\}$;
 
 (2) For each $i\in\{1,2,\ldots,n\}$,
 $S(A_i)\not\subseteq S(A_j) \quad \text{for every } j\neq i$,
     or equivalently, $S(A_i)\not\subseteq \bigcap_{j\neq i} S(A_j)$.

In the special case $S = \{1\}$, the definition of an $S$-primary ideal coincide with an ordinary primary ideal. Consequently, $S$-primary decomposition becomes identical to the usual primary decomposition.

\begin{Lemma}\label{6}
If $A$ is $S$-$P$-primary, then $S(A)$ is $S(P)$-primary in the semiring $R$.
\end{Lemma}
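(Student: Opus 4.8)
The plan is to verify directly that the contraction $S(A)=f^{-1}(S^{-1}A)$ satisfies the \emph{classical} primary condition and that its radical equals $S(P)$; the point to keep in mind throughout is that, unlike the defining property of an $S$-primary ideal, the conclusion here must be the clean dichotomy ``$a\in S(A)$ or $b\in\sqrt{S(A)}$'' with no surviving multiplier from $S$. The enabling observation is the standard description of the contraction as a saturation: unwinding the localization relation, $\tfrac{x}{1}\in S^{-1}A$ holds iff there is some $v\in S$ with $vx\in A$, so
\[
S(A)=\{\,x\in R: vx\in A \text{ for some } v\in S\,\}.
\]
Being the preimage of an ideal under the semiring homomorphism $f$, $S(A)$ is an ideal; and it is proper, since $1\in S(A)$ would force some $v=v\cdot 1\in A\cap S$, contradicting $A\cap S=\varnothing$.

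First I would establish the radical identity $\sqrt{S(A)}=S(\sqrt A)=S(P)$. For one inclusion, if $ux\in\sqrt A$ then $(ux)^m\in A$ for some $m$, i.e.\ $u^m x^m\in A$, so $x^m\in S(A)$ and $x\in\sqrt{S(A)}$. For the reverse, if $vx^n\in A$ then $(vx)^n=v^{\,n-1}(vx^n)\in A$ since $A$ is an ideal, whence $vx\in\sqrt A$ and $x\in S(\sqrt A)$. Using $\sqrt A=P$ then gives $\sqrt{S(A)}=S(P)$.

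The heart of the argument is the primary condition, and this is where the saturation does the essential work. Suppose $ab\in S(A)$, so $vab\in A$ for some $v\in S$. Let $s\in S$ be the fixed witness of the $S$-primary property of $A$. Applying it to the factorization $(va)\,b\in A$ yields either $s(va)\in A$ or $sb\in\sqrt A$. In the first case $sv\in S$ and $(sv)a\in A$, so $a\in S(A)$; in the second case $sb\in\sqrt A$ gives $b\in S(\sqrt A)=\sqrt{S(A)}$. Thus $ab\in S(A)$ implies $a\in S(A)$ or $b\in\sqrt{S(A)}$, which is precisely the classical primary condition: both the ad hoc multiplier $v$ and the fixed witness $s$ have been absorbed into the saturation. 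Finally, since $S(A)$ is primary, its radical is prime by the usual argument valid in semirings (if $ab\in\sqrt{S(A)}$ then $a^nb^n\in S(A)$ for some $n$, and primariness of $S(A)$ gives $a^n\in S(A)$ or $b^n\in\sqrt{S(A)}$, whence $a\in\sqrt{S(A)}$ or $b\in\sqrt{S(A)}$). Hence $S(P)=\sqrt{S(A)}$ is a prime ideal and $S(A)$ is genuinely $S(P)$-primary.

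The step I expect to be the main obstacle---indeed the one distinguishing this statement from the weaker assertion that $S(A)$ is merely $S$-primary---is the absorption in the primary condition: one must check that applying the single fixed witness $s$ of $A$ to the already-saturated product $vab$ produces an element of $S$ (namely $sv$, resp.\ $s$) that can be swallowed by $S(A)$ (resp.\ by $\sqrt{S(A)}$), leaving no multiplier in the final implication. I would also take care that the saturation description of $S(A)$ and the power-absorption identities hold in the present additive cancellative, zerosumfree, yoked setting, since these are the only places where the semiring structure genuinely enters.
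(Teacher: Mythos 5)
Your proposal is correct and follows essentially the same route as the paper: unwind the contraction $S(A)=f^{-1}(S^{-1}A)$ to the saturated condition ($ab\in S(A)$ iff $vab\in A$ for some $v\in S$), apply the $S$-primary witness of $A$ to the factorization $(va)\cdot b\in A$, and absorb the resulting multipliers from $S$ back into $S(A)$ and $S(P)$. You are in fact slightly more thorough than the paper, which only verifies the primary implication ``$a\in S(A)$ or $b\in S(P)$'' and leaves implicit the properness of $S(A)$ and the identity $\sqrt{S(A)}=S(\sqrt{A})=S(P)$ that you prove explicitly.
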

\begin{proof}
    Suppose $a,b \in R$ with $ab \in S(A)$. Then $\frac{ab}{1} \in S^{-1}A$, so there exist
$x \in A$ and $s \in S$ such that $\frac{ab}{1} = \frac{x}{s}$. Hence, for some $s_1 \in S$,
$s_1sab \in A$. Since $A$ is $S$-$P$-primary, there exists $s' \in S$ such that
$s's_1sa \in A$ or $s'b \in \sqrt{A}=P$. Therefore, either $\frac{a}{1} =\frac{s's_1sa}{s's_1s}\in S^{-1}A$ or
$\frac{b}{1}=\frac{s'b}{s'} \in S^{-1}P$, which shows that $a \in S(A)$ or $b \in S(P)$. So $S(A)$ is
$S(P)$-primary.

\end{proof}
\begin{Theorem}
    Let $R$ be an additively cancellative, yoked and commutative $S$-Noetherian semiring, and let
$I = \bigcap_{i=1}^{n} A_i$
be a minimal $S$-$k$-primary decomposition of $I$, where each $A_i$ is $P_i$-$S$-$k$-primary for $i = 1,2,\ldots,n$. Then the ideals $S(P_i)$ coincide with the prime ideals appearing in the set $\{ S (\sqrt{(I : x)} \} $, $ x \in R$.
Consequently, the ideals $S(P_i)$ do not depend on the specific choice of an $S$-primary decomposition of $I$.

\end{Theorem}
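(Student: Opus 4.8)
The plan is to push everything through the contraction operator $S(\cdot)$, exploiting that each $s\in S$ becomes invertible after localization, so that the "slack by an element of $S$" pervading the $S$-primary theory collapses to genuine equalities once $S(\cdot)$ is applied. First I would record the elementary identities that make the colon/radical calculus compatible with $S(\cdot)$: for $x\in R$ one has $(I:x)=\bigcap_{i=1}^{n}(A_i:x)$; taking radicals, $\sqrt{(I:x)}=\bigcap_{i=1}^{n}\sqrt{(A_i:x)}$; and, using that $S^{-1}(\cdot)$ commutes with finite intersections (this is where the additive-cancellation, zerosumfree, yoked hypotheses are needed to keep localization well behaved) together with $f^{-1}(\bigcap)=\bigcap f^{-1}$, one obtains $S(\sqrt{(I:x)})=\bigcap_{i=1}^{n}S(\sqrt{(A_i:x)})$. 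I would also note the idempotence $S(S(J))=S(J)$, since $S^{-1}(f^{-1}(S^{-1}J))=S^{-1}J$; this is used to tidy the final containments.

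The central computation is a case analysis of each factor $S(\sqrt{(A_i:x)})$. If $x\in S(A_i)$, i.e.\ $sx\in A_i$ for some $s\in S$, then $s\in (A_i:x)\subseteq\sqrt{(A_i:x)}$, so $s/1$ is a unit of $S^{-1}R$ lying in $S^{-1}\sqrt{(A_i:x)}$; hence $S^{-1}\sqrt{(A_i:x)}=S^{-1}R$ and $S(\sqrt{(A_i:x)})=R$. If instead $sx\notin A_i$ for every $s\in S$, then the earlier Proposition (applied with $s=1$) gives that $(A_i:x)$ is $S$-$k$-primary and that there is $r\in S$ with $r\sqrt{(A_i:x)}\subseteq\sqrt{A_i}=P_i\subseteq\sqrt{(A_i:x)}$. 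The right inclusion yields $S(P_i)\subseteq S(\sqrt{(A_i:x)})$, while the left inclusion, after localizing and using that $r/1$ is a unit, yields $\sqrt{(A_i:x)}\subseteq S(P_i)$ and hence $S(\sqrt{(A_i:x)})\subseteq S(S(P_i))=S(P_i)$. Thus $S(\sqrt{(A_i:x)})=S(P_i)$. Combining the two cases gives the master formula $S(\sqrt{(I:x)})=\bigcap_{i\,:\,x\notin S(A_i)}S(P_i)$, an intersection that depends only on which contractions $S(A_i)$ fail to absorb $x$.

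With this formula the two inclusions of the claimed set equality follow in the familiar pattern. For the forward direction, fix $i$; the minimality (irredundancy) of the decomposition lets me choose $x_i\in\bigcap_{j\neq i}S(A_j)$ with $x_i\notin S(A_i)$, so the index set collapses to $\{i\}$ and $S(\sqrt{(I:x_i)})=S(P_i)$. Since each $P_i$ is $S$-prime and disjoint from $S$, the ideal $S(P_i)=f^{-1}(S^{-1}P_i)$ is the contraction of a prime of $S^{-1}R$, hence a genuine prime of $R$; so every $S(P_i)$ occurs in the target set. For the reverse direction, suppose $S(\sqrt{(I:x)})=\bigcap_{i\in T}S(P_i)$ is prime for some $x$, where $T=\{\,i:x\notin S(A_i)\,\}$. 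Then $T\neq\varnothing$ (otherwise the intersection is the improper ideal $R$, which is not prime), and from $\prod_{i\in T}S(P_i)\subseteq\bigcap_{i\in T}S(P_i)=S(\sqrt{(I:x)})$ primeness forces $S(P_i)\subseteq S(\sqrt{(I:x)})$ for some $i\in T$; the reverse containment being automatic, we get $S(\sqrt{(I:x)})=S(P_i)$. Hence the primes among $\{\,S(\sqrt{(I:x)}):x\in R\,\}$ are exactly $\{S(P_1),\dots,S(P_n)\}$, a description that makes no reference to the chosen decomposition, which is precisely the asserted independence.

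I expect the main obstacle to be the case $x\notin S(A_i)$: verifying that the two-sided sandwich $r\sqrt{(A_i:x)}\subseteq P_i\subseteq\sqrt{(A_i:x)}$ survives contraction to yield the clean equality $S(\sqrt{(A_i:x)})=S(P_i)$. This is the step where the semiring-specific bookkeeping must be done by hand rather than inherited from ring theory, in particular justifying that $S^{-1}(\cdot)$ commutes with finite intersections and with the colon operation in an additively cancellative, zerosumfree, yoked semiring, and that the $k$-ideal conditions are preserved throughout so that the contracted ideals remain the objects to which Lemma~\autoref{6} and the preceding Proposition apply.
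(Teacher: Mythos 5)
Your proposal is correct, and at the architectural level it matches the paper: both proofs derive the master formula $S(\sqrt{(I:x)})=\bigcap_{i\,:\,x\notin S(A_i)}S(P_i)$, get the forward direction from the fact that a prime ideal equal to a finite intersection of the primes $S(P_i)$ must coincide with one of them, and get the converse from minimality via a witness $x_i\in\bigl(\bigcap_{j\neq i}S(A_j)\bigr)\setminus S(A_i)$. The genuine difference is the key lemma behind the per-factor identity $S(\sqrt{(A_i:x)})=S(P_i)$. The paper contracts first: it uses Lemma~\ref{6} to see that $S(A_i)$ is $S(P_i)$-primary and then applies the classical colon--radical lemma for ordinary primary ideals (Lemma 2.8(iv) of \cite{sk}) to the contracted ideals. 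You instead stay in $R$ as long as possible: you apply the paper's earlier Proposition on colon ideals (with $s=1$) to get the sandwich $r\sqrt{(A_i:x)}\subseteq P_i\subseteq\sqrt{(A_i:x)}$ with $r\in S$, and only then contract, using that $r$ becomes a unit in $S^{-1}R$ to collapse the sandwich into an equality. Your route buys two things: it bypasses Lemma~\ref{6} and the external reference entirely, and your explicit case split on $x\in S(A_i)$ versus $x\notin S(A_i)$ repairs an indexing sloppiness in the paper, which writes the intersection over $\{\,i : x\notin A_i\,\}$ although the argument (in particular the converse step, where $(S(A_j):x_i)=R$ requires $x_i\in S(A_j)$) really needs the index set $\{\,i : x\notin S(A_i)\,\}$; these sets can differ, since $A_i\subseteq S(A_i)$ may be strict. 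Your observation that the index set must be nonempty whenever the contraction is prime (otherwise the empty intersection is all of $R$) is likewise a detail the paper leaves implicit. What the paper's route buys in exchange is that, once Lemma~\ref{6} is in hand, the whole computation reduces to the ordinary (non-$S$) primary theory of the contracted ideals, with no new semiring-specific colon argument required.
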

\begin{proof}

Let $x \in R$, the colon ideal decomposes as $(I : x) = (A_1 : x) \cap \cdots \cap (A_n : x)$, by Lemma 2.8(iv) of \cite{sk}, whenever $x \notin A_i$, we have $\sqrt{(A_i : x)} = P_i$. Thus $\sqrt{(I : x)} = \bigcap_{{i \mid x \notin A_i,}} P_i$.
Localization preserves finite intersections, so $S^{-1}(I : x) = \bigcap_{i=1}^n (S^{-1}A_i : S^{-1}R \frac{x}{1})$. Contracting to $R$ gives $S(I : x) = \bigcap_{i=1}^n (S(A_i) : x)$. Hence $S(\sqrt{(I : x)}) = \bigcap_{{i \mid x \notin A_i,}} \sqrt{S(A_i) : x}$.
Each ideal $S(P_i)$ is prime, since $S^{-1}P_i$ is prime and the contraction of a prime ideal remains prime. By Lemma \autoref{6}, the ideal $S(A_i)$ is $S(P_i)$-primary. By Lemma 2.8(iv) of \cite{sk}, therefore yields $S(\sqrt{(I : x)}) = \bigcap_{{i \mid x \notin A_i,}} S(P_i)$.
Suppose $S(\sqrt{(I : x)})$ is prime. Being an intersection of the prime ideals $S(P_i)$, ( \cite{am}, Proposition 1.11), implies that $S(\sqrt{(I : x)}) = S(P_j)$ for some $j$. Thus, any prime ideal arising as $S(\sqrt{(I : x)})$  must be equal to some $S(P_j)$.
Conversely, the minimality of the reduced $S$-primary decomposition ensures that, for each $i$, there exists an element $x_i \in (\bigcap_{j \ne i} S(A_j)) \setminus S(A_i)$. For such $x_i$, we have $S(I : x_i) = \bigcap_{j=1}^n (S(A_j) : x_i) = (S(A_i) : x_i)$, because $(S(A_j) : x_i) = R$ for every $j \ne i$. Applying Lemma 2.8(iv) of \cite{sk}, we obtain $S(\sqrt{(I : x_i)}) = S(P_i)$ for $1 \le i \le n$.

Thus each $S(P_i)$ arises as $S(\sqrt{(I : x)})$ for some $x \in R$, and no other prime ideal of this form can occur. This proves the uniqueness of the set of associated prime radicals and completes the proof.
\end{proof}

\section {S-radical on S-primary ideal}
\begin{Definition}
    Let $R$ be a commutative semiring and $S$ be a multiplicative subset of
 $R$. The $S$-radical of an ideal $I$ is defined by 

        $\sqrt[S]{I}=\{a\in R : sa^n \in I $, for some $s\in S$ and $n\in \mathbf{N}$\} 
\end{Definition}
\begin{Example}
Let $R=\mathbf{Z}_0^{+}$ and let 
$S=\{\,2^{n} : n \in \mathbb{N}\,\}$
be a multiplicatively closed subset of $R$. Consider the ideal $I=(18)$ of $R$. 
Then the $S$-radical of $I$ is given by
$\sqrt[S]{I}
=\{\,a \in R : s a^{n} \in (18) \text{ for some } s \in S \text{ and } n \in \mathbb{N}\,\}$.
From this, it follows that
$\sqrt[S]{I} = (3)$.

\end{Example}
    
    
    
    
   

\begin{Theorem} \label{3}
 $S$-radical of every  $S$-prime ideal is a prime ideal  of a semiring $R$.
\end{Theorem}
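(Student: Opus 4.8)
The plan is to fix an $S$-prime ideal $P$ of $R$; by definition $P\cap S=\varnothing$ and there is a single witness $s_0\in S$ such that $ab\in P$ implies $s_0a\in P$ or $s_0b\in P$ for all $a,b\in R$. I would then check the three requirements for $\sqrt[S]{P}$ to be a prime ideal: that it is an ideal, that it is proper, and that it satisfies the prime absorption law.

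First, to see that $\sqrt[S]{P}$ is an ideal, multiplicative absorption is immediate, since $sa^n\in P$ gives $s(ra)^n=r^n(sa^n)\in P$ for every $r\in R$. For additive closure, given $s_1a^m\in P$ and $s_2b^\ell\in P$, I would expand $s_1s_2(a+b)^{m+\ell-1}$ by the binomial theorem, which is valid in a commutative semiring as no subtraction occurs; every monomial $a^ib^j$ has $i+j=m+\ell-1$, so $i\ge m$ or $j\ge\ell$, and in either case the term lies in $P$ because $P$ absorbs multiplication and already contains $s_1a^m$ or $s_2b^\ell$. Hence $a+b\in\sqrt[S]{P}$. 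Properness is equally quick: if $1\in\sqrt[S]{P}$ then $s\cdot 1^n=s\in P$ for some $s\in S$, contradicting $P\cap S=\varnothing$, so $\sqrt[S]{P}\neq R$.

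The heart of the proof is the prime condition. Suppose $xy\in\sqrt[S]{P}$, so that $s_1(xy)^n=(s_1x^n)(y^n)\in P$ for some $s_1\in S$ and $n\in\mathbb{N}$. Applying the $S$-prime property to the factorization with $a=s_1x^n$ and $b=y^n$, the witness $s_0$ forces either $s_0(s_1x^n)=(s_0s_1)x^n\in P$ or $s_0y^n\in P$. Since $s_0s_1\in S$ and $s_0\in S$, the first alternative gives $x\in\sqrt[S]{P}$ and the second gives $y\in\sqrt[S]{P}$, as required.

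The step I expect to be most delicate is this last application of the $S$-prime property: one must group the factors of $s_1x^ny^n$ so that a single use of the fixed witness $s_0$ splits cleanly into the two desired cases, while keeping track that the accumulated multipliers $s_0s_1$ and $s_0$ stay inside $S$. The ideal-theoretic verification through the binomial expansion is routine, but it is precisely where the semiring (rather than ring) setting must be handled with care, since the usual ring-theoretic shortcut of forming differences is unavailable.
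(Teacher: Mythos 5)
Your proposal is correct and follows essentially the same route as the paper: write $s_1(xy)^n = (s_1x^n)(y^n) \in P$, apply the $S$-prime property to this grouped factorization, and absorb the accumulated elements of $S$ (here $s_0s_1$ and $s_0$) back into the definition of the $S$-radical. The only difference is that you additionally verify that $\sqrt[S]{P}$ is a proper ideal (via the binomial expansion, which is indeed the right semiring-safe argument), a step the paper leaves implicit rather than a genuinely different approach.
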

\begin{proof}
   Let $P$ be an $S$-prime ideal of $R$. We show that $\sqrt[S]{P}$ is a prime ideal of $R$. 
Take $x, y \in R$ with $xy \in \sqrt[S]{P}$. By the definition of the $S$-radical, 
there exist $s \in S$ and $n \in \mathbb{N}$ such that 
$s(xy)^n = s x^{n} y^{n} \in P$.
Since $P$ is $S$-prime, either there exists $s_{1} \in S$ such that $s_{1} x^{n} \in P$, in that case 
$x \in \sqrt[S]{P}$, or, if $s_{1} x^{n} \notin P$, the $S$-primeness of $P$ guarantees the existence of $s_{2} \in S$ such that 
$s_{2}(s y^{n}) \in P$. 
Setting $s_{3} = s_{2}s$, we obtain $s_{3} y^{n} \in P$, and hence $y \in \sqrt[S]{P}$.

Therefore, whenever $xy \in \sqrt[S]{P}$, it follows that 
$x \in \sqrt[S]{P}$ or $y \in \sqrt[S]{P}$. 
Thus, $\sqrt[S]{P}$ is a prime ideal of $R$.

\end{proof}
\begin{Theorem} \label{4}
   $ S$-radical of every $S$-primary ideal is a prime ideal of a semiring $R$.
\end{Theorem}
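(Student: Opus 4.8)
The plan is to mirror the proof of Theorem~\ref{3}, the point being that the extra slack in the $S$-primary definition, namely that the right-hand alternative lands only in the \emph{ordinary} radical $\sqrt{P}$, is harmless once we pass to the $S$-radical, because that ordinary radical is itself absorbed into $\sqrt[S]{P}$. So let $P$ be an $S$-primary ideal of $R$ and let $s_0\in S$ be a fixed witness for the $S$-primary property: for all $a,b\in R$, if $ab\in P$ then $s_0a\in P$ or $s_0b\in\sqrt{P}$.

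Before checking primeness I would record two routine facts. First, $\sqrt[S]{P}$ is a proper ideal: closure under multiplication by elements of $R$ is immediate from $s(ra)^n=r^n(sa^n)$, closure under addition follows from the semiring binomial expansion together with the multiplicative closure of $S$, and $1\notin\sqrt[S]{P}$ since $1\in\sqrt[S]{P}$ would give $s\cdot1^n=s\in P$ for some $s\in S$, contradicting $P\cap S=\varnothing$. Second, to test primeness I take $x,y\in R$ with $xy\in\sqrt[S]{P}$ and unwind the definition to produce $s\in S$ and $n\in\mathbb{N}$ with $s(xy)^n=s\,x^n y^n\in P$.

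The decisive move is to regroup this as $(s\,x^n)(y^n)\in P$ and apply the $S$-primary property with the fixed witness $s_0$, taking $a=s\,x^n$ and $b=y^n$. In the first alternative $s_0(s\,x^n)\in P$, and since $s_0s\in S$ this says precisely $x\in\sqrt[S]{P}$. In the second alternative $s_0y^n\in\sqrt{P}$, so $(s_0y^n)^m=s_0^m\,y^{nm}\in P$ for some $m\in\mathbb{N}$; as $s_0^m\in S$ we get $y\in\sqrt[S]{P}$. In either case $x\in\sqrt[S]{P}$ or $y\in\sqrt[S]{P}$, so $\sqrt[S]{P}$ is prime.

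The only step needing real attention, and what I would flag as the main obstacle, is this second alternative: the definition only guarantees $s_0y^n\in\sqrt{P}$ rather than $s_0y^n\in P$, and the argument works exactly because raising to the $m$-th power and absorbing the factor $s_0^m$ back into $S$ is what the $S$-radical is built to accommodate. I would also make sure the witness $s_0$ can be chosen once and for all, independently of $x$ and $y$, so that the two cases are genuinely exhaustive and no hidden dependence on the auxiliary $s$ or $n$ breaks the conclusion.
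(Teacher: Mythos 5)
Your proof is correct and follows essentially the same route as the paper's: write $s(xy)^n = s\,x^n y^n \in P$, apply the $S$-primary property to a two-factor grouping of this product, and absorb the resulting extra $S$-factors and exponents into the definition of $\sqrt[S]{P}$ (you attach $s$ to $x^n$, the paper attaches it to $y^n$ --- an immaterial difference). Your added verification that $\sqrt[S]{P}$ is a proper ideal, which the paper omits, is a welcome extra rather than a deviation.
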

\begin{proof}
Let $A$ be an $S$-primary ideal of $R$. We show that $\sqrt[S]{A}$ is a prime ideal of $R$. 
Take $x, y \in R$ such that $xy \in \sqrt[S]{A}$. By the definition of the $S$-radical, 
there exist $s \in S$ and $n \in \mathbb{N}$ such that 
$s(xy)^{n} = sx^{n}y^{n} \in A$.
Since $A$ is $S$-primary, either there exists $s_{1} \in S$ with $s_{1}x^{n} \in A$, 
which implies $x \in \sqrt[S]{A}$, or, if $s_{1}x^{n} \notin A$, then the $S$$-$primary property of $A$ guarantees the existence of $s_{2} \in S$ and $k \in \mathbb{N}$ such that 
$(s_{2}s y^{n})^{k} \in A$.
Setting $s_{3} = (s_{2}s)^{k}$ gives $s_{3}y^{nk} \in A$, and hence $y \in \sqrt[S]{A}$.
Thus, whenever $xy \in \sqrt[S]{A}$, we have 
$x \in \sqrt[S]{A}$ or $y \in \sqrt[S]{A}$. 
Therefore, $\sqrt[S]{A}$ is a prime ideal of $R$, and consequently the $S$-radical of every $S$-primary ideal is a prime ideal of $R$.

\end{proof}

\begin{Remark}
    Let $P$ be a $S$-prime ideal, then $S$-radical of $P$ and $S$-$P$ primary ideal are same.
\end{Remark}

 \section {S-Primary ideals in PISD} 
 
Recall, 
   A semidomain $R$ is called a principal ideal semidomain if every ideal of $R$ can be expressed as a principal ideal. 
In other words, for any ideal $I \subseteq R$, there exists an element $a \in R$ such that 
$I = (a) = \{ ra : r \in R \}$.

\begin{Example}
   Let the semiring $R=(\mathbf{N}_0,gcd,\cdot)$ is a  principal ideal semidomain, since every ideal of $R$ is principal ideal generated by a single element $(a)=a\mathbf{N}_0$, where $a\in\mathbf{N}_0 $. 
\end{Example}

We adopt the following notations, consistent with the conventions used in \cite{aq}, and employ them throughout the part.
 Let $R$ be a principal ideal semidomain, and let $\mathbf{P}$ denote the set of all irreducible (prime) elements of $R$ define in \cite{na}.
 For a multiplicative subset $S$ of $R$, define
$\mathbf{P}_S = \{\, p \in \mathbf{P} : (p) \cap S \neq \varnothing \,\}$. Hence, $\mathbf{P}_S$ is the collection of irreducible elements of $R$ that divide some element in $S$.
 Equivalently, an irreducible element $p$ belongs to $\mathbf{P}_S$ if there exist $s \in S$ and $b \in R$ such that $s = bp$.

We will frequently use the following result, which appears in (\cite{bs}, proposition 2.9).
\begin{Proposition}
     Let $R$ be a commutative semiring, $S$ a multiplicative subset of $R$ and A an ideal of $R$ disjoint from $S$. Then the following assertions are equivalent:
     
(1) $A$ is an $S$-primary ideal of $R$.

(2)  ($A:s$) is a primary ideal of $R$ for some $s\in S$.
\end{Proposition}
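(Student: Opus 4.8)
The plan is to prove the two implications separately, in each case transferring between $A$ and the colon ideal $(A:s)$ through the same element $s\in S$. Throughout I will use the elementary facts that $A\subseteq(A:s)$ for every $s$ (because $A$ is an ideal, so $x\in A$ gives $sx\in A$), and that the hypothesis $A\cap S=\varnothing$ forces $s^m\notin A$ for all $m\geq 1$ --- this last point is what will do the real work.

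For $(2)\Rightarrow(1)$, which I expect to be routine, I would suppose $(A:s)$ is primary for some $s\in S$ and take $a,b\in R$ with $ab\in A$. Since $A\subseteq(A:s)$ we have $ab\in(A:s)$, so, as $(A:s)$ is primary, either $a\in(A:s)$ or $b\in\sqrt{(A:s)}$. In the first case $sa\in A$ directly. In the second case $sb^m\in A$ for some $m$, whence $(sb)^m=s^{m-1}(sb^m)\in A$ and therefore $sb\in\sqrt A$. Thus the fixed element $s$ witnesses that $A$ is $S$-primary, properness being immediate from $1\in S$ and $A\cap S=\varnothing$.

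For $(1)\Rightarrow(2)$ I would take the witness $s\in S$ supplied by the $S$-primary hypothesis and claim that this same $(A:s)$ is primary. The naive attempt meets the expected obstacle: if $ab\in(A:s)$ with $a\notin(A:s)$, then writing $sab=a\cdot(sb)\in A$ and applying the $S$-primary property yields only $s^2b\in\sqrt A$, hence $s^{2k}b^k\in A$ for some $k$; but to conclude $b\in\sqrt{(A:s)}$ I must produce $sb^m\in A$, i.e. strip the surplus powers of $s$ down to exactly one. This cancellation is not available in a general (semi)ring, and overcoming it is the crux of the argument.

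The key step I would isolate is a reduction lemma: for every $z\in R$ and every $j\geq 1$, $s^jz\in A$ implies $sz\in A$. This I would prove by induction on $j$: for $j\geq 2$ one factors $s^jz=(s^{j-2}z)\cdot s^2\in A$ and invokes the $S$-primary property, obtaining $s^{j-1}z\in A$ or $s^3\in\sqrt A$; the latter would give $s^{3\ell}\in A$ for some $\ell$, contradicting $A\cap S=\varnothing$, so $s^{j-1}z\in A$ and the induction closes. Applying this lemma with $z=b^k$ and $j=2k$ turns $s^{2k}b^k\in A$ into $sb^k\in A$, that is $b\in\sqrt{(A:s)}$, which finishes the verification that $(A:s)$ is primary. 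I expect this reduction lemma --- and the realization that $A\cap S=\varnothing$ is precisely the ingredient that powers it --- to be the only genuine difficulty; the remaining bookkeeping, such as checking that $(A:s)$ is proper, is immediate from $s\in S$ and $A\cap S=\varnothing$.
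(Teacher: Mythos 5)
Your proof is correct, but there is no in-paper argument to compare it against: the paper does not prove this proposition at all, it simply imports it from \cite{bs} (Proposition 2.9) as a known result to be used in the PISD section. So your write-up in effect supplies a proof the paper omits. Judged on its own merits, both directions are sound: $(2)\Rightarrow(1)$ is the routine transfer you describe, and in $(1)\Rightarrow(2)$ you correctly identify the crux --- turning $s^{2k}b^k\in A$ into $sb^k\in A$ with no cancellation available --- and your reduction lemma resolves it legitimately, since the rejected branch $s^3\in\sqrt{A}$ would put $s^{3\ell}\in A\cap S$ (as $S$ is multiplicatively closed), contradicting disjointness. One simplification worth noting: the induction in your lemma is unnecessary. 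Given $s^jz\in A$, factor it as $z\cdot s^j$ and apply the $S$-primary hypothesis once: either $sz\in A$, and you are done, or $s\cdot s^j=s^{j+1}\in\sqrt{A}$, which again forces a power of $s$ into $A\cap S$. This collapses the reduction to a single application of the hypothesis. Finally, your argument correctly relies on the uniform-witness reading of the definition (one $s\in S$ serving all pairs $a,b$), which is the Massaoud-style definition the paper states in its introduction and is exactly what licenses using the same $s$ to form the colon ideal $(A:s)$.
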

The $S$-primary ideals of a principal ideal semidomain are completely determined in the following results.
\begin{Theorem}

    Let $R$ be a principal ideal semidomain and $S$ be a multiplicative subset
 of $R$ and let $A$ be an ideal of $R$. The following statements are equivalent:

 (1) $A$ is an $S$-primary ideal of $R$.
 
 (2)  $A = (vp^n)$, $n\in N$ and for some $p \in \mathbf P -\mathbf P_S$ and $v\in R$ such that $(v)\cap S\neq \varnothing $. 
\end{Theorem}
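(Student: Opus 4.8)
The plan is to prove the equivalence by establishing both implications, relying on the characterization in Proposition~2.9 of \cite{bs} (stated above) that $A$ is $S$-primary if and only if $(A:s)$ is primary for some $s\in S$, together with the structure of primary ideals in a principal ideal semidomain. Throughout I would use the factorization of elements into irreducibles available in a PISD, and the defining property of $\mathbf{P}_S$: an irreducible $p$ lies in $\mathbf{P}_S$ precisely when $(p)\cap S\neq\varnothing$, equivalently when $p$ divides some element of $S$.

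For the direction $(2)\Rightarrow(1)$, I would start from $A=(vp^n)$ with $p\in\mathbf{P}-\mathbf{P}_S$ and $(v)\cap S\neq\varnothing$. The idea is to pick $s\in S\cap(v)$, so $s=wv$ for some $w\in R$, and then show $(A:s)$ is primary by identifying it as (an ideal closely related to) $(p^n)$. Since $p\notin\mathbf{P}_S$, the prime $p$ does not divide any element of $S$, which forces $A\cap S=\varnothing$ and guarantees that the contribution of $v$ is ``absorbed'' by $s$ while the $p^n$ factor survives; one then checks directly that $(A:s)=(p^n)$ is primary because its radical $(p)$ is prime (as $p$ is irreducible, hence prime, in the PISD). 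Invoking Proposition~2.9 then yields that $A$ is $S$-primary.

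For the converse $(1)\Rightarrow(2)$, I would begin with an $S$-primary ideal $A$ and use Proposition~2.9 to obtain $s\in S$ with $(A:s)$ primary. Since $R$ is a PISD, write $A=(a)$ and $(A:s)=(q)$ for some $q\in R$; because $(A:s)$ is primary, its radical is generated by a single irreducible element $p$, so $q$ is (up to a unit and lower-order factors) a power $p^n$. The task is then to recover the generator of $A$ itself from $q$ and $s$: writing $a$ in terms of its irreducible factorization, the factors dividing $s$ must come from $\mathbf{P}_S$ and can be collected into an element $v$ with $(v)\cap S\neq\varnothing$, while the essential primary part contributes the factor $p^n$ with $p\in\mathbf{P}-\mathbf{P}_S$ (if $p$ were in $\mathbf{P}_S$ then $A$ would meet $S$, contradicting $A\cap S=\varnothing$). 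This produces the desired form $A=(vp^n)$.

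The main obstacle I anticipate is the converse direction, specifically the bookkeeping of the irreducible factorization of the generator $a$ of $A$ and cleanly separating it into the ``$S$-visible'' part $v$ (factors from $\mathbf{P}_S$) and the genuine primary part $p^n$ (with $p\notin\mathbf{P}_S$). In a semiring one must be careful, since subtraction is unavailable, that the ideal-theoretic manipulations $(A:s)=(q)$ and the passage back to a generator of $A$ are valid; this is where the hypotheses that $R$ is a semidomain and that ideals are principal do the real work, ensuring cancellation of the multiplicative factors and unique enough factorization to pin down $p$ and $n$. Verifying that exactly one irreducible outside $\mathbf{P}_S$ appears (so that the radical of $(A:s)$ is a single prime) is the delicate point that makes the primary hypothesis essential.
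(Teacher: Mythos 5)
Your plan is correct, and both implications run through the same key lemma as the paper's proof, namely Proposition~2.9 of \cite{bs}; your $(2)\Rightarrow(1)$ argument (take $s\in (v)\cap S$, show $(A:s)=(p^n)$ using the fact that $p\notin\mathbf{P}_S$ forces $s\notin(p)$, then conclude by the proposition) is essentially identical to the paper's. Where you genuinely diverge is the converse. The paper never invokes a full factorization of the generator: writing $A=(q)$ and $(A:s)=(p^n)$, it works from the single relation $sp^n=q'q$ and two case analyses --- primaryness of $(p^n)$ rules out $q'\in(p^n)$ (which by cancellation would give $s=qq''\in(q)\cap S$), and then, writing $q=vp^m$ with $m\ge n$, the case $m>n$ is excluded because it forces $p\in\mathbf{P}_S$, whence $(p^n)\cap S\neq\varnothing$ and $ss'\in A\cap S$; the surviving case $m=n$ yields $s=vq'$, so $(v)\cap S\neq\varnothing$ falls out for free. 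Your route instead partitions the irreducible factorization of the generator into its $\mathbf{P}_S$-part $v$ and the part outside $\mathbf{P}_S$, and uses primaryness of $(A:s)$ to force the latter to be a single prime power; your two supporting observations (irreducibles outside $\mathbf{P}_S$ cannot divide $s$, so they survive the colon operation; $(v)\cap S\neq\varnothing$ because each factor of $v$ divides some element of $S$ and $S$ is multiplicatively closed) are exactly the right mechanisms, and you correctly flag the single-prime claim as the crux. The trade-off is that your argument needs an extra background theorem: that a PISD is a unique factorization semidomain (irreducible $\Rightarrow$ prime via the B\'ezout-type argument $(a,p)=(d)$, which survives the absence of subtraction, plus the ascending chain condition for existence of factorizations). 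The paper gets by with multiplicative cancellation and the fact that primary ideals of a PISD have the form $(p^n)$ --- a fact both proofs assume without proof --- so it is somewhat more economical, while yours is more transparent about where $v$ and $p^n$ actually come from.
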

\begin{proof}

\textbf{(1) $\Rightarrow$ (2):}
 Let $A = (q)$ be a nonzero $S$-primary ideal of $R$, and let $s \in S$ be such that $(A : s)$ is a primary ideal of $R$.  
Since $A \subseteq (A : s)$, we have $(q) \subseteq (A : s)$. Hence, there exists an irreducible element $p \in R$ such that $(A : s) = (p^n)$ for some $n \in \mathbb{N}$. Consequently, $(q) \subseteq (p^n)$, which implies $p^n s \in A$. Thus, $p^n s = q' q$ for some $q' \in R$. In particular, $q' q \in (p^n)$, and therefore either $q' \in (p^n)$ or $q \in \sqrt{(p^n)}$.
If $q' \in (p^n)$, then $q' = q'' p^n$ for some $q'' \in R$. Hence,
$s p^n = q' q = q q'' p^n$,
which implies $s = q q'' \in (q) \cap S$. This contradicts the assumption that $(q)$ is $S$-primary and $(q) \cap S = \varnothing$. Hence $q \in \sqrt{(p^n)}$ and since $(q) \subseteq (p^n)$, there exists $v \in R$ and $m \in \mathbb{N}$ such that $q = v p^m$ with $m \geq n$.  
Suppose $m > n$. Then
$s p^n = q' q = v q' p^m$,
which implies $s = v q' p^k$ where $k + n = m$. Hence, $p \in \mathbf{P}_S$.  
If $p \in \mathbf{P}_S$, then $(p) \cap S \neq \varnothing$, which further implies $(p^n) \cap S \neq \varnothing$.  
Let $s' \in S$ such that $s' = c p^n$ for some $c \in R$. Then
$s s' = s c p^n \in I$,
contradicting the fact that $A = (q)$ is $S$-primary and $(A) \cap S = \varnothing$.
Therefore, we must have $m = n$, so that $q = v p^n$ for some $v \in R$.  
In this case,
$s p^n = q' q = v q' p^n$,
which implies $s = v q'$. Hence, $(v) \cap S \neq \varnothing$, and it follows that $A = (q) = (v p^n)$ with $(v) \cap S \neq \varnothing$ and $p \in \mathbf{P} \setminus \mathbf{P}_S$.

\textbf{(2) $\Rightarrow$ (1):} Let $A = (v p^n)$, where $p \in \mathbf{P} - \mathbf{P}_S$ and $(v) \cap S \neq \varnothing$.  
Then there exist elements $s \in S$ and $v_0 \in R$ such that $s = v v_0$.  
Let $x \in (A : s)$. Then $s x \in A$, and hence $s x = \alpha v p^n$ for some $\alpha \in R$.  
Therefore, $s x \in (p^n)$, which implies that $x \in (p^n)$, since $s \notin (p)$.  
Thus, $(A : s) \subseteq (p^n)$.
Conversely, since $p^n s = v v_0 p^n \in A$, it follows that $(p^n) \subseteq (A : s)$.  
Hence, $(A : s) = (p^n)$, which is a primary ideal of $R$.  
Therefore, $A$ is an $S$-primary ideal of $R$.

In particular, if $n = 1$, then $A = (v p)$, where $p \in \mathbf{P} - \mathbf{P}_S$ and $v \in R$ with $(v) \cap S \neq \varnothing$, is an $S$-prime ideal of the principal ideal semidomain $R$.


\end{proof}

\begin{Example}
  Consider the semiring $R = (\mathbf{Z}_0^+, \gcd, \cdot)$ and the multiplicative set 
$S = \{3^k : k \in \mathbf{N}\}$. 
In this case, the only irreducible element that divides an element of $S$ is $3$, and hence 
$\mathbf{P}_S = \{3\}$.
Let $A$ be a nonzero $S$-primary ideal of $\mathbf{Z}_0^+$. 
Such an ideal can be written in the form $A = (v p^k)$, where $p$ is a prime with $p \ne 3$ 
and $v \in \mathbf{Z}_0^+$ satisfies $(v) \cap S \ne \varnothing$. 
Thus, there exist $m \in \mathbf{Z}_0^+$ and $k \in \mathbf{N}$ such that $m v = 3^k$, 
which implies that $v = 3^l$ for some $l \in \mathbf{N}$.
Therefore, the $S$-primary ideals of $\mathbf{Z}_0^+$ are those of the form 
$A = (3^l p^k)$, where $p \ne 3$ is a positive prime integer and $l, k \in \mathbf{N}$.

\end{Example}

\begin{Theorem}
    
Let $R$ be a principal ideal semidomain. If $A = (v p^k)$ is an $S$-primary ideal of $R$, then its radical $P = \sqrt{A}$ is an $S$-maximal ideal of $R$.

\end{Theorem}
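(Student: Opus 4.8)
The plan is to feed the preceding structure theorem into the definition of $S$-$k$-maximality. Since $A$ is an $S$-primary ideal of the principal ideal semidomain $R$, that theorem lets me write $A=(vp^{n})$ with $p\in\mathbf{P}\setminus\mathbf{P}_S$ and $(v)\cap S\neq\varnothing$; I fix once and for all an element $s_0\in (v)\cap S$, so that $v\mid s_0$. The first step is to pin down the radical explicitly. Writing $w$ for the product of the distinct irreducible factors of $v$, I would show that $P=\sqrt{A}=(wp)$: an element $x$ lies in $\sqrt{A}$ exactly when $vp^{n}\mid x^{m}$ for some $m$, which happens precisely when every irreducible factor of $vp^{n}$ divides $x$, i.e. when $wp\mid x$. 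Because $p\notin\mathbf{P}_S$ while every irreducible factor of $v$ (hence of $w$) lies in $\mathbf{P}_S$, we have $p\nmid w$, so $wp$ is squarefree with irreducible divisors exactly those of $w$ together with $p$.

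Next I would record the two standing facts required by the definition. First, $P=(wp)$ is a $k$-ideal, since in an additively cancellative, yoked and zerosumfree semiring every principal ideal is a $k$-ideal by \cite{sk} (Lemma 2.6). Second, $P\cap S=\varnothing$: if some $s'\in S$ lay in $(wp)$, then $wp\mid s'$ would force $p\mid s'$, giving $(p)\cap S\neq\varnothing$ and contradicting $p\notin\mathbf{P}_S$. Finally, $w\mid v\mid s_0$, so $w\mid s_0$ with $s_0\in S$, which is the inequality that will drive the verification.

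The heart of the argument is checking the maximality condition using the single witness $s_0$. Let $I$ be any ideal with $P\subseteq I$; as $R$ is a PISD, $I=(d)$, and $P\subseteq I$ forces $d\mid wp$, so $d$ is a squarefree divisor of $wp$. If $p\nmid d$, then $d\mid w\mid s_0$, hence $s_0\in (d)\cap S$ and $I\cap S\neq\varnothing$. If $p\mid d$, write $d=d'p$ with $d'\mid w$; then $s_0 I=(s_0 d)=(s_0 d' p)$, and since $w\mid s_0$ we get $wp\mid s_0 d' p$, that is $s_0 I\subseteq (wp)=P$. In either case the defining property of an $S$-$k$-maximal ideal holds with the same $s_0$, so $P$ is $S$-maximal.

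The main obstacle is the explicit determination of the radical together with the use of factorization in the PISD: I must justify that $\sqrt{A}=(wp)$ and that every ideal containing $P$ is generated by a squarefree divisor of $wp$. This rests on the unique-factorization behaviour of irreducible (prime) elements in a principal ideal semidomain, as developed in \cite{na,aq}, and on the order-reversing correspondence $(a)\subseteq (b)\iff b\mid a$ between principal ideals and divisibility. Once this factorization structure is available, the case split on whether $p$ divides the generator $d$ produces \emph{uniformly} the single witness $s_0$, which is exactly what the definition of $S$-maximality demands; I would also remark that this direct computation is consistent with the localization picture, in which the factors of $w$ become units and $S^{-1}P$ reduces to the maximal ideal generated by $p$.
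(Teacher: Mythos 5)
Your proposal is correct, and it follows the same overall strategy as the paper's proof: invoke the structure theorem to write $A=(vp^{n})$ with $p\in\mathbf{P}\setminus\mathbf{P}_S$ and $(v)\cap S\neq\varnothing$, then, for an arbitrary principal ideal $I=(d)$ containing $P$, split into the cases $p\mid d$ and $p\nmid d$, obtaining $s_0 I\subseteq P$ in the first case and $I\cap S\neq\varnothing$ in the second, with a single witness $s_0\in(v)\cap S$. However, your treatment differs from (and in fact corrects) the paper's at two points. First, the paper simply asserts $\sqrt{A}=(vp)$, which is only valid when $v$ is squarefree: for instance with $v=9$, $p=2$, $k=1$ one has $\sqrt{(18)}=(6)\neq(18)$. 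Your identification $\sqrt{A}=(wp)$, with $w$ the product of the distinct irreducible factors of $v$, is the correct statement, and your observation that every irreducible factor of $v$ lies in $\mathbf{P}_S$ (so that $p\nmid w$, $w\mid s_0$, and $(wp)\cap S=\varnothing$) is precisely what lets the same two-case argument run for $(wp)$ exactly as the paper runs it for $(vp)$. Second, the paper closes its second case by declaring a ``contradiction'' with $S$-primariness, whereas $I\cap S\neq\varnothing$ is simply the second permitted alternative in the definition of $S$-$k$-maximality; you handle this correctly by accepting either alternative. Your additional verifications --- the uniformity of the witness $s_0$ over all $I\supseteq P$, the disjointness $P\cap S=\varnothing$, and the $k$-ideal property of $P$ (though the appeal to Lemma 2.6 of \cite{sk} strictly requires the additively cancellative, yoked, zerosumfree hypotheses, which the theorem's bare PISD hypothesis does not announce) --- address details the paper leaves implicit; the reliance on unique factorization of irreducibles in a PISD, which you flag explicitly via \cite{na} and \cite{aq}, is common to both arguments.
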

   
\begin{proof}

Let $A$ be a nonzero $S$-primary ideal of a principal ideal semidomain $R$. 
Then $A = (v p^k)$ for some $p \in \mathbf{P} - \mathbf{P}_S$ and $v \in R$ with $(v) \cap S \neq \varnothing$. 
Hence, the radical of $A$ is $P = \sqrt{A} = (v p)$ is a S-prime ideal. 
Let $I = (x)$ be an ideal of $R$ such that $P \subseteq I$. 
Since $v p \in (x)$, we have $v p = xy$ for some $y \in R$. 
In particular, $xy \in (p)$, which implies that either $x \in (p)$ or $y \in (p)$.
  
  \textbf{Case 1:} Suppose that $x \in (p)$. Then $x = x' p$ for some $x' \in R$. 
Substituting into $v p = xy$, we have $v p =x' y p$, and hence $v = x'y$. 
Since $(v) \cap S \neq \varnothing$, there exists $t \in R$ such that 
$s = t v = t x'y \in S$. 
Therefore,
$s x = t v x = t x' v p \in (v p)$.
It follows that $s I \subseteq P$.

 \textbf{Case 2:} Suppose that $x \notin (p)$. Then $y\in (p)$, so $y = y' p$ for some $y' \in R$. 
Substituting into $v p = xy$, we obtain $v p =x y' p $, and hence $v = xy' $. 
Since $(v) \cap S \neq \varnothing$, it follows that 
$\varnothing \neq (v) \cap S \subseteq (x) \cap S$.
Therefore, $I \cap S \neq \varnothing$, which contradicts the assumption that $A$ is an $S$-primary ideal. So,  $P=\sqrt{A}$ is a $S$-maximal ideal of $R$.

\end{proof}

\begin{Proposition}\label{5}

    In PISD, $S$-radical of every $S$-primary ideal is a prime ideal of $R$.
\end{Proposition}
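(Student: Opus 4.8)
The plan is to give a direct structural proof that exploits the explicit description of $S$-primary ideals in a PISD obtained earlier, rather than merely quoting the general result. Indeed, one shortcut is available: since every PISD is a commutative semiring, the statement is literally a special case of Theorem~\autoref{4}, and a one-line proof could just invoke that theorem. However, the PISD structure lets me pin down the $S$-radical exactly, which is more informative, so I would prove the sharper claim that $\sqrt[S]{A}=(p)$ and then observe that $(p)$ is prime because $p$ is an irreducible (prime) element.

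First I would apply the characterization theorem to write a given nonzero $S$-primary ideal as $A=(vp^n)$, where $p\in\mathbf{P}-\mathbf{P}_S$, $n\in\mathbb{N}$, and $v\in R$ satisfies $(v)\cap S\neq\varnothing$; fix $s_0\in(v)\cap S$, so $v\mid s_0$. The inclusion $(p)\subseteq\sqrt[S]{A}$ is the easy direction: for $a=pa'\in(p)$, the element $s_0a^n=s_0p^n a'^n$ is divisible by $vp^n$ (since $v\mid s_0$), hence $s_0a^n\in A$ and $a\in\sqrt[S]{A}$. For the reverse inclusion $\sqrt[S]{A}\subseteq(p)$, take $a\in\sqrt[S]{A}$, so that $vp^n\mid sa^m$, and in particular $p^n\mid sa^m$, for some $s\in S$ and $m\in\mathbb{N}$. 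The crucial point is that $p\in\mathbf{P}-\mathbf{P}_S$ means $(p)\cap S=\varnothing$, i.e. $p\nmid s$ for every $s\in S$; this forces $p^n\mid a^m$, and then primality of $p$ gives $p\mid a$, that is, $a\in(p)$.

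Combining the two inclusions yields $\sqrt[S]{A}=(p)$. Since $p$ is a prime element of $R$, the ideal $(p)$ is prime: if $ab\in(p)$ then $p\mid ab$, so $p\mid a$ or $p\mid b$, whence $a\in(p)$ or $b\in(p)$. This establishes the proposition, and in fact records the stronger conclusion that the $S$-radical collapses onto the single irreducible factor $p$ lying outside $\mathbf{P}_S$, discarding the $S$-absorbed factor $v$.

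The step I expect to be the main obstacle is the divisibility implication ``$p^n\mid sa^m$ and $p\nmid s\ \Rightarrow\ p^n\mid a^m$.'' In a ring this is immediate from unique factorization, but in the semiring setting I would need to justify it from the prime/irreducible structure of a principal ideal semidomain, using that a prime element $p$ coprime to $s$ cannot have any of its $n$ copies supplied by $s$, so all of $p^n$ must divide $a^m$. Everything else is a routine unwinding of the definition of $\sqrt[S]{\,\cdot\,}$ against the structural form $A=(vp^n)$, so this cancellation of the coprime factor $s$ is the only delicate point.
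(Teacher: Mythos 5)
Your proposal is correct and follows essentially the same route as the paper: invoke the structural characterization $A=(vp^n)$ with $p\in\mathbf{P}-\mathbf{P}_S$ and $(v)\cap S\neq\varnothing$, show that $\sqrt[S]{A}=(p)$, and conclude primality from the primeness of $p$. The only difference is one of detail --- the paper simply asserts ``it follows that $\sqrt[S]{I}=(p)$,'' whereas you supply the two-inclusion verification (including the coprime-cancellation step $p^n\mid sa^m$, $p\nmid s\Rightarrow p^n\mid a^m$, which does hold in a semidomain with multiplicative cancellation), so your write-up in fact fills a gap the paper leaves implicit.
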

\begin{proof}
  
Let $R$ be a principal ideal semidomain (PISD). 
Then every $S$-primary ideal of $R$ is of the form $I = (v p^k)$, 
where $k \in \mathbf{N}$, $p \in \mathbf{P} - \mathbf{P}_S$, and $v \in R$ such that $(v) \cap S \neq \varnothing$. 
Recall that the $S$-radical of an ideal $A$ is defined by
$\sqrt[S]{A} = \{\, a \in R : s a^n \in A \text{ for some } s \in S \text{ and } n \in \mathbf{N} \,\}$.
Therefore,
$\sqrt[S]{I} = \{\, p_1 \in R : s p_1^n \in I \text{ for some } s \in S \text{ and } n \in \mathbf{N} \,\}$.
It follows that $\sqrt[S]{I} = (p)$, which is a prime ideal of $R$.

\end{proof}
\begin{Proposition}
     In PISD, $S$-radical of nonzero $S$-primary ideal is a maximal ideal of $R$.
\end{Proposition}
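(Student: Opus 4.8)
The plan is to combine the structural classification of $S$-primary ideals in a PISD with the classical fact that, in a principal ideal semidomain, the ideal generated by an irreducible element admits no proper ideal strictly above it. First I would invoke the structure theorem proved above: a nonzero $S$-primary ideal $I$ of $R$ has the form $I=(vp^{n})$ with $n\in\mathbf{N}$, $p\in\mathbf{P}-\mathbf{P}_S$, and $(v)\cap S\neq\varnothing$. By Proposition~\ref{5} its $S$-radical is exactly $\sqrt[S]{I}=(p)$, which is a nonzero prime ideal of $R$. Since $p$ is irreducible it is in particular a non-unit, so $(p)\neq R$; thus the whole problem reduces to showing that the proper ideal $(p)$ is maximal.

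For maximality I would argue directly from the principal-ideal hypothesis. Suppose $(p)\subseteq J$ for an ideal $J$ of $R$; as $R$ is a PISD, write $J=(x)$. From $p\in(x)$ I obtain $p=xr$ for some $r\in R$. The irreducibility of $p$ forces one factor to be a unit: if $x$ is a unit then $1\in(x)$ and $J=(x)=R$, while if $r$ is a unit then $x=pr^{-1}$ is an associate of $p$, whence $(x)=(p)$ and $J=(p)$. Consequently no ideal lies strictly between $(p)$ and $R$, so $(p)=\sqrt[S]{I}$ is a maximal ideal, as claimed.

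The main obstacle I anticipate is not the ideal-theoretic skeleton, which mirrors the familiar PID computation, but the semiring-specific bookkeeping: because there are no additive inverses, I must verify that the notion of irreducible element taken from \cite{na} genuinely supplies the dichotomy ``$x$ is a unit or $r$ is a unit'' in the factorization $p=xr$, and that $(x)=R$ holds precisely when $x$ is a unit. I would let the semidomain hypothesis do this work, using that $R$ is a semidomain to justify the associate step $(x)=(p)$ from $x=pr^{-1}$, and I would note that this result is the sharper $S$-radical analogue of the earlier theorem: there the ordinary radical $\sqrt{A}=(vp)$ was shown to be $S$-maximal, whereas here the $S$-radical $\sqrt[S]{I}=(p)$ is genuinely maximal in $R$.
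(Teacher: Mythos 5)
Your proposal is correct and follows essentially the same route as the paper: first reduce to the preceding proposition, which gives $\sqrt[S]{I}=(p)$ prime, and then use that a nonzero prime ideal in a PISD is maximal. The only difference is that the paper asserts this last fact without proof, while you actually establish it via the unit dichotomy $p=xr$ with $x$ or $r$ a unit; this is a worthwhile addition, since that step is exactly where the irreducibility of $p$ and the multiplicative cancellativity of a semidomain are genuinely needed.
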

\begin{proof}
   From Proposision \autoref{5}, we see that in a PISD, the $S$-radical of every $S$-primary ideal is a prime ideal of $R$. Moreover, in a PISD, every nonzero prime ideal is a maximal ideal of $R$.

\end{proof}

\begin{Proposition}
  Let $R$ be a in a PISD, $S$ be a multiplicative subset of $R$ and $A$ be an primary ideal disjoint with $S$ of $R$ . If  $P=\sqrt{A}$ is an $S$-$k$-prime ideal of $R$, then $A$ is also an $S$-$k$-primary ideal of $R$. We say $A$ is $S$-$k$-$P$-primary ideal of $R$.
\end{Proposition}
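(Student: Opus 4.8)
The plan is to check, one at a time, the three requirements in the definition of an $S$-$k$-primary ideal---disjointness from $S$, the $S$-primary implication, and the $k$-ideal property---and then to read off the $P$-label from the hypothesis on $\sqrt{A}$. The disjointness is immediate, since $A$ is assumed disjoint from $S$, and because $1\in S$ this already forces $A$ to be a proper ideal.

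For the $S$-primary implication I would exploit that $A$ is primary in the classical sense: whenever $a,b\in R$ satisfy $ab\in A$, we have $a\in A$ or $b\in\sqrt{A}$. Taking the witness $s=1\in S$, this says precisely that $sa\in A$ or $sb\in\sqrt{A}$, so $A$ is $S$-primary with $s=1$. To see that $A$ is a $k$-ideal, I would invoke that $R$ is a PISD which is additively cancellative, yoked, and zerosumfree, so that every principal ideal of $R$ is a $k$-ideal by Lemma~2.6 of \cite{sk}; since $A$ is principal, it is a $k$-ideal. Combining these two facts with the disjointness shows that $A$ is $S$-$k$-primary.

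Finally, to justify calling $A$ an $S$-$k$-$P$-primary ideal, I would use the hypothesis that $P=\sqrt{A}$ is $S$-$k$-prime. Every $S$-$k$-prime ideal is in particular $S$-prime, so $\sqrt{A}=P$ is an $S$-prime ideal, which is exactly the extra condition in the definition of an $S$-$k$-$P$-primary ideal. The step I expect to carry the real content is the $k$-ideal verification, as it is the only place where the ambient PISD hypotheses are genuinely used; the $S$-primary implication itself is essentially automatic once one notices that $s=1$ works. As a cross-check consistent with the earlier characterisation of $S$-primary ideals in a PISD, note that $S$-$k$-primeness of $P$ forces $P\cap S=\varnothing$: writing $P=(p)$ this means $p\in\mathbf{P}-\mathbf{P}_S$, so that $A$ takes the form $(vp^{n})$ with $(v)\cap S\neq\varnothing$, giving an alternative route to $S$-primariness through that theorem.
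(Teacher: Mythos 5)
Your first two steps are fine: disjointness is assumed, and since $1\in S$, the classical primary property of $A$ immediately gives the $S$-primary implication with witness $s=1$; the $S$-$k$-$P$-primary labelling also follows once everything else is in place, because an $S$-$k$-prime ideal is in particular $S$-prime. The genuine gap is at exactly the step you yourself flag as carrying the real content: the verification that $A$ is a $k$-ideal. You invoke Lemma~2.6 of \cite{sk}, but that lemma requires $R$ to be additively cancellative, yoked and zerosumfree, and none of these properties is part of the hypothesis here --- the proposition assumes only that $R$ is a PISD, i.e.\ a semidomain all of whose ideals are principal. These axioms do not follow from being a PISD: the paper's own standard example of a PISD, $R=(\mathbf{N}_0,\gcd,\cdot)$, is neither additively cancellative ($\gcd(2,4)=\gcd(2,6)$ but $4\neq 6$) nor yoked (there is no $t$ with $\gcd(2,t)=3$ or $\gcd(3,t)=2$), so the lemma simply does not apply to it. (In that particular semiring every ideal happens to be a $k$-ideal, but by a direct argument, not via the lemma.) As written, your $k$-ideal step is therefore unjustified for a general PISD.

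A related symptom: your argument never uses the hypothesis that $P=\sqrt{A}$ is a $k$-ideal, except to attach the label at the end. That hypothesis is presumably there precisely to feed the $k$-property of $A$: for instance, from $x+y\in A$ and $x\in A$ one gets $y\in P$ because $A\subseteq P$ and $P$ is a $k$-ideal, and the remaining work is to upgrade $y\in P$ to $y\in A$ using primariness and the principal-ideal structure. For what it is worth, the paper offers no argument at all here --- it declares the statement obvious and devotes the ``proof'' to a counterexample ($R=(\mathbf{Z}_0^+,+,\cdot)$, $A=2\mathbf{Z}_0^+\setminus\{2\}$) showing the PISD hypothesis cannot be dropped --- so there is no official argument to match yours against; but a complete proof must still close the $k$-ideal step under the stated hypotheses, and yours does not. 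Note also that your final cross-check via the characterization theorem for $S$-primary ideals in a PISD cannot repair this, since that theorem concerns only $S$-primariness and says nothing about the $k$-ideal property.
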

\begin{proof}
   Its a obvious case . 
\

 If $R$ is not a PISD (Principal Ideal Semidomain), the above result may not hold.
Consider the semiring $R = (\mathbf{Z}_0^+,+,\cdot)$, which is not a PISD.  
Let $P = 2\mathbf{Z}_0^+$ and $A = 2\mathbf{Z}_0^+ \setminus \{2\}$.
Then $\sqrt{A} = P$, so $P$ is a $k$-ideal, but $A$ is not a $k$-ideal.  
Indeed, $2 + 4 = 6 \in A$ and $4 \in A$, but $2 \notin A$, which violates the condition for being a $k$-ideal.
Moreover, $A$ is not a primary ideal, since $3 \cdot 2 = 6 \in A$, but $2 \notin A$ and $3 \notin \sqrt{A}$.
However, if we take 
$S = \{2n + 1 : n \in \mathbf{N}_0\}$,
then $A$ is an $S$-primary ideal.

\end{proof}

\end{document}